\documentclass{amsart}
\usepackage{amsmath,amssymb,amsthm,mathtools}
\usepackage{booktabs}

\usepackage{tikz-cd}
\tikzcdset{arrow style=tikz, diagrams={>=stealth}}

\usepackage{hyperref}

\usepackage{nicematrix}
\usepackage[boxruled,vlined,noend,noalgohanging,nofillcomment]{algorithm2e}

\usepackage{color}

\usepackage{changes}

\DeclareTextFontCommand{\texttt}{\ttfamily\upshape}

\theoremstyle{plain}
\newtheorem{theorem}{Theorem}[section]
\newtheorem{lemma}[theorem]{Lemma}
\newtheorem{proposition}[theorem]{Proposition}
\newtheorem{corollary}[theorem]{Corollary}
\numberwithin{equation}{section}

\theoremstyle{definition}

\newtheorem{example}[theorem]{Example}

\theoremstyle{remark}
\newtheorem{remark}[theorem]{Remark}

\newcommand{\Z}{\mathbb Z}
\newcommand{\R}{\mathbb R}
\newcommand{\F}{\mathbb F}

\newcommand{\cO}{\mathcal O}
\newcommand{\cI}{\mathcal I}
\newcommand{\cF}{\mathcal F}
\newcommand{\bI}{\mathbf I}
\newcommand{\scol}{\mathsf{col}}
\newcommand{\srow}{\mathsf{row}}

\DeclareMathOperator{\cone}{cone}
\DeclareMathOperator{\Wed}{Wed}
\DeclareMathOperator{\join}{\ast}

\DeclareMathOperator{\Proj}{Proj}
\DeclareMathOperator{\supp}{supp}
\DeclareMathOperator{\head}{head}
\DeclareMathOperator{\rank}{rank}
\DeclareMathOperator{\corank}{corank}

\title[Toric-Colorable Seeds of Picard Number Five]{Enumerating Toric-Colorable Seeds of Picard Number Five via Binary Matroids}

\date{\today}

\author{Suyoung Choi}
\address{Department of Mathematics, Ajou University, Suwon 16499, Republic of Korea}
\email{schoi@ajou.ac.kr}

\author{Mathieu Vall\'ee}
\address{Universit\'e libre de Bruxelles, CP212, Boulevard du Triomphe, 1050 Brussels, Belgium}
\email{mathieu.vallee@protonmail.com}

\keywords{Toric topology, PL~sphere, Toric-colorable sphere, binary matroid, dynamic programming.}
\subjclass[2020]{57S12, 52B40, 05B35, 52B05}

\thanks{Computational resources have been provided by the Consortium des Équipements de Calcul Intensif (CÉCI), funded by the Fonds de la Recherche Scientifique de Belgique (F.R.S.-FNRS) under Grant No. 2.5020.11 and by the Walloon Region.
This work was also supported by the National Research Foundation of Korea Grant funded by the Korean Government (RS-2025-00521982)
}

\begin{document}

\begin{abstract}
    We introduce a binary matroid framework for the enumeration of mod~$2$ toric-colorable seeds of fixed Picard number. Working with binary matroids up to isomorphism, we organize them through their contraction structure and recursively enumerate weak pseudomanifold subcomplexes by a dynamic programming algorithm. The resulting method combines these structural reductions with a Gray-code traversal of the mod~$2$ kernel of the ridge--facet incidence matrix.

    Using this framework, we complete the classification of mod~$2$ toric-colorable seeds of dimension four and Picard number five, proving that there are exactly $198{,}846$ isomorphism classes. This case was computationally infeasible for the GPU-based algorithm previously used by Choi, Jang, and Vall\'ee to treat Picard number four. We further verify that each of these seeds admits an integral characteristic map. As a validation of the method, we also reproduce that Picard number four classification: the weak pseudomanifold enumeration stage drops from over ten days on a GPU to ten minutes on a single CPU.
\end{abstract}

\maketitle

\section{Introduction}
A \emph{toric manifold} is a complete nonsingular toric variety.
By the fundamental theorem of toric geometry, toric manifolds are described by complete nonsingular fans.
This description turns geometric questions about toric manifolds into combinatorial questions about cones and lattice vectors.
Thus the classification of toric manifolds is closely related to the classification of certain simplicial spheres with characteristic maps.

Let $\Sigma$ be a complete nonsingular fan of dimension $n$, and let $\rho_1,\ldots,\rho_m$ be its rays.
For each $i$, let $v_i$ be the primitive generator of $\rho_i$.
The fan $\Sigma$ determines a simplicial complex $K_\Sigma=\{I\subset [m]\mid \cone(v_i\mid i\in I)\in \Sigma\}$.
Since $\Sigma$ is complete and nonsingular, $K_\Sigma$ is an $(n-1)$-dimensional piecewise linear (PL)~sphere on $[m]$.
The assignment of primitive ray generators gives a map $\lambda_\Sigma\colon [m]\to \Z^n$ given by $i\mapsto v_i$.
This map satisfies the characteristic condition
\begin{quote}
    $\{\lambda_\Sigma(i)\mid i\in I\}$ is part of a $\Z$-basis of $\Z^n$ for every $I\in K_\Sigma$.  
\end{quote}
Equivalently, the vectors assigned to every facet of $K_\Sigma$ form a $\Z$-basis of $\Z^n$.

A PL~sphere $K$ is called \emph{toric-colorable} if it supports a $\Z$-characteristic map.
It is called \emph{fanlike} if it supports a $\Z$-characteristic map whose associated cones form a complete nonsingular fan.
Thus every fanlike PL~sphere is toric-colorable, but the converse might not hold.
The fanlike condition is the condition needed to construct toric manifolds.
The toric-colorable condition is a necessary first condition.

In this paper, we mainly use the mod~$2$ version of this condition.
Let $K$ be an $(n-1)$-dimensional PL~sphere on $[m]$.
We say that $K$ is \emph{mod~$2$ toric-colorable}, or \emph{$\Z_2^n$-colorable}, if there exists a map $\lambda^\R\colon [m]\to \Z_2^n$ such that
\begin{quote}
    $\{\lambda^\R(i)\mid i\in I\}$ is linearly independent over $\Z_2$ for every $I\in K$.  
\end{quote}
Equivalently, the vectors assigned to every facet of $K$ form a basis of $\Z_2^n$.
Every $\Z$-characteristic map reduces to such a mod~$2$ characteristic map.
Hence every fanlike PL~sphere is mod~$2$ toric-colorable.
This makes mod~$2$ toric-colorability an important and computable necessary condition in the classification problem.

The mod~$2$ condition is rephrased as follows in terms of binary matroids.
Indeed, a map $\lambda^\R\colon [m]\to \Z_2^n$ defines a binary matroid $M_{\lambda^\R}$ on $[m]$, whose independent sets are precisely  those subsets $I\subset [m]$ for which $\{\lambda^\R(i)\mid i \in I\}$ is linearly independent.
Thus, after identifying a matroid with its independence complex, the mod~$2$ non-singularity condition for $\lambda^\R$ with respect to $K$ is equivalent to the inclusion
$$
    K\subseteq M_{\lambda^\R}.
$$
This reformulation is the main combinatorial idea used within this paper.

Throughout, $K$ is an $(n-1)$-dimensional PL~sphere on $[m]$.
Following the toric convention, we call $p=m-n$ the \emph{Picard number} of $K$.

The classification of toric manifolds by Picard number is a classical problem in algebraic geometry.
Toric manifolds of Picard number one are isomorphic to projective spaces.
Those of Picard number two were classified by Kleinschmidt~\cite{Kleinschmidt1988}.
The Picard number three case was studied by Batyrev~\cite{Batyrev1991}.
Recently, Choi, Jang, and Vall\'ee completed the classification of complete nonsingular fans of Picard number four in~\cite{Choi-Jang-Vallee2025}.

Every PL~sphere lies in the wedge class of a wedge-minimal PL~sphere, which is called a \emph{seed}.
Equivalently, every PL~sphere can be obtained from a seed by a sequence of simplicial wedge operations.
If $K$ has dimension $n-1$ and $m$ vertices, then a wedge operation increases both $n$ and $m$ by one.
Hence it preserves the Picard number.
By Choi and Park~\cite{Choi-Park2016Wedge}, every complete nonsingular fan over $K$ can be obtained from a complete nonsingular fan over a seed of $K$ by the corresponding sequence of wedge operations.
Therefore, the classification of toric manifolds with fixed Picard number reduces to the classification of fanlike seeds with that Picard number and the classification of characteristic maps over them that give complete nonsingular fans.

The finiteness theorem in~\cite{Choi-Park2017WedgeOperationsTorus} states that, for each fixed Picard number, there are only finitely many mod~$2$ toric-colorable seeds.
Thus, the enumeration of mod~$2$ toric-colorable seeds, and hence the search for fanlike seeds among them, is a finite problem.

Choi, Jang, and Vall\'ee carried out this program for Picard number four.
First, they enumerated all mod~$2$ toric-colorable seeds of Picard number four by using a GPU algorithm, and found $3{,}153$ such seeds~\cite{Choi-Jang-Vallee2024}.
Second, they determined which of these seeds are fanlike by testing which characteristic maps give complete nonsingular fans, and found $59$ fanlike seeds.
In addition, they classified the characteristic maps over these seeds that yield complete nonsingular fans.
This provides the classification of toric manifolds with Picard number four~\cite{Choi-Jang-Vallee2025}.

In this paper, we introduce a binary matroid approach to the enumeration of mod~$2$ toric-colorable seeds.

By Corollary~\ref{cor:non_IDCM_colorable_seeds}, every mod~$2$ toric-colorable seed which is not the boundary of a cross-polytope is a subcomplex of a cosimple binary matroid of the same Picard number; the excluded cross-polytope boundaries are reintroduced directly in Section~\ref{sec:postprocessing}. Consequently, the enumeration of mod~$2$ toric-colorable seeds via cosimple binary matroids reduces to the finite collection of cosimple binary matroids of fixed corank, which for such matroids coincides with the Picard number~$p$.
Since taking the link of a vertex corresponds to contracting the associated binary matroid, we organize these matroids according to their contraction structure. This leads to a dynamic programming algorithm that recursively enumerates weak pseudomanifold subcomplexes, while a Gray-code traversal accelerates the computation over the mod~$2$ kernel of the ridge--facet incidence matrix. The resulting complexes are reduced by the automorphism group of their ambient cosimple binary matroid, then filtered by the PL sphere and seed conditions, and finally reduced up to isomorphism.

In particular, we complete here the new case $(n,p)=(5,5)$, that is, dimension four and Picard number five.
We also verify, in Section~\ref{sec:toric-colorable}, that every resulting mod~$2$ toric-colorable seed PL~sphere admits an integral characteristic map; equivalently, our enumeration is also the enumeration of toric-colorable seeds of dimension four and Picard number five.
This case cannot be obtained from the existing database of PL~spheres, since it would require a complete database of PL~$4$-spheres with ten vertices.
Moreover, the GPU-accelerated algorithm of~\cite{Choi-Jang-Vallee2024} was unable to terminate on this case of PL~$4$-spheres with ten vertices.
Our algorithm gives the following value.

\begin{theorem}\label{thm-main}
    The number of toric-colorable seeds of dimension four and Picard number five is $198{,}846$ up to combinatorial isomorphism.
\end{theorem}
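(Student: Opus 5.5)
The count is established computationally, and the plan is to make each stage of the enumeration provably exhaustive, so the final number is a certified count rather than a heuristic one. Take $(n,p)=(5,5)$, so $m=10$. By Corollary~\ref{cor:non_IDCM_colorable_seeds}, every mod~$2$ toric-colorable seed $K$ of Picard number five that is not the boundary of a cross-polytope satisfies $K\subseteq M$ for some cosimple binary matroid $M$ on $[10]$ of rank $5$ and corank $5$. The first step enumerates all such matroids up to isomorphism. Dually, these are the simple binary matroids of rank $5$ on $10$ elements, that is, $10$-element subsets of $\F_2^5\setminus\{0\}$ spanning $\F_2^5$, taken modulo $GL_5(\F_2)$. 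For each one we record its automorphism group and the contractions $M/i$. These contractions are binary matroids of rank $4$ on $9$ elements, and they still have corank $5$ after deleting loops and collapsing parallel classes.

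The second step enumerates, for each $M$, all pure $4$-dimensional weak pseudomanifolds $K\subseteq M$ on $[10]$. Such a $K$ is a set of bases of $M$ in which every ridge lies in $0$ or $2$ facets. The mod~$2$ part of this condition says exactly that the indicator vector of $K$ lies in the kernel over $\Z_2$ of the ridge--facet incidence matrix of the bases of $M$. The remaining part, that no ridge lies in more than two facets, is imposed as a filter. Searching the kernel directly is too large, so we use recursion: the link of a vertex $i$ in $K$ is a weak pseudomanifold of one lower dimension inside the contraction $M/i$. Processing matroids in order of rank, the dynamic program first lists all admissible links inside each contraction. It then builds $K$ by gluing stars whose links must agree on every shared edge. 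Within the resulting reduced kernel, a Gray-code traversal updates the ridge-degree counts in constant time per step. The correctness claim is completeness: every pseudomanifold $K\subseteq M$ arises, because all of its vertex links are pseudomanifolds inside the corresponding contractions.

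The third step reduces the list modulo $\mathrm{Aut}(M)$ and then filters it. We keep only those complexes that are PL~spheres, which in this dimension and vertex count can be certified by bistellar-flip reduction to a standard sphere, with homology and link checks used as fast pre-filters. We also keep only seeds, meaning that no pair of vertices has the wedge property, which is equivalent to the absence of a suspension-like pair $\{i,j\}$ with $\mathrm{lk}\,i=\mathrm{lk}\,j$. We then add the cross-polytope boundary cases back in directly, as done in Section~\ref{sec:postprocessing}. Finally we identify isomorphic complexes across different matroids by canonical labeling of the facet hypergraphs, for instance with nauty. The result is $198{,}846$ classes of mod~$2$ toric-colorable seeds.

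The last step upgrades mod~$2$ colorability to toric colorability. Every toric-colorable seed is mod~$2$ toric-colorable, so it suffices to exhibit, for each of the $198{,}846$ spheres, an integral characteristic map: a map $[10]\to\Z^5$ in which every facet determinant equals $\pm1$. We search for one by lifting the known mod~$2$ characteristic maps $\lambda^\R$ to entries in $\{-1,0,1\}$ and checking the facet determinants, as in Section~\ref{sec:toric-colorable}. The main obstacle is the second step, the weak pseudomanifold enumeration: its cost is exactly what defeated the GPU approach. Completeness there rests on the link--contraction correspondence being exact, and feasibility rests on the dynamic-programming pruning. A smaller secondary risk is that some sphere resists the lifting heuristic; in that case we would fall back to a full search over the mod~$2$ characteristic maps and their lifts.
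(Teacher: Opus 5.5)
Your pipeline follows the paper's in outline: the cosimple reduction, a contraction-indexed dynamic program with a Gray-code search of $\ker_2 A(M)$, reduction by $\operatorname{Aut}(M)$, re-adding the cross-polytope boundary, and an integral-lift search. However, two of your filters would not give an exact count.

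\textbf{The seed test is wrong.} A pair with $\mathrm{lk}\,i=\mathrm{lk}\,j$ is necessarily a non-edge, since an edge would put $j$ in $\mathrm{lk}\,j$. In a PL sphere such a pair forces $K=\Sigma(\mathrm{lk}\,i)$, so it is a suspension pair, not a wedge pair. Wedged vertices $v_1,v_2$ of $\Wed_v(L)$ span an edge, and their links are never equal. Instead they are exchanged by the transposition $(v_1\,v_2)$, i.e.\ $\mathrm{lk}\,v_1\setminus v_2=\mathrm{lk}\,v_2\setminus v_1$; equivalently, every minimal non-face contains both or neither of $v_1,v_2$. Your filter therefore never detects a wedge, and it discards every suspension. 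The second failure already changes the count. Suspensions of seeds are seeds (as used in Lemma~\ref{lemma:IDCM_suspension}). So for each of the $21$ seeds $L$ with $(n,p)=(4,4)$, the complex $\Sigma L$ is a mod~$2$ toric-colorable seed with $(n,p)=(5,5)$. Re-adding only the cross-polytope boundary still loses $20$ of them. The correct test is the edge criterion above, or, as in the paper, repeatedly undoing wedges to compute the seed.

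\textbf{The PL-sphere step is not exact.} Bistellar-flip reduction is only a semi-decision procedure. A successful reduction proves $K$ is a PL sphere, but a failed search proves nothing. You give no reason why it must succeed on every $4$-dimensional candidate with $10$ vertices; no census of such spheres exists, which is exactly why the paper cannot use a database here. Any candidate on which the flips stall is left undecided, so the number is not certified. The missing ingredient is the Bagchi--Datta theorem (Theorem~\ref{theorem:combisp}): a closed PL manifold that is a $\Z_2$-homology sphere with $m-n\le 7$ is a PL sphere. The paper certifies PL-manifoldness exactly, by de-wedging each vertex link and matching its seed against the database of seeds on fewer vertices (Algorithm~\ref{algo:test_PLS}, Proposition~\ref{prop:PLS-test}). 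With that theorem, your homology and link pre-filters become the complete test and no flips are needed.

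\textbf{Two smaller slips.} First, the contractions $M/i$ are already cosimple of corank $5$ (Proposition~\ref{proposition:contraction_matroids}). Their loops and parallel pairs must be kept: deleting loops lowers the corank, and collapsing a parallel class loses links in which both elements are vertices. Second, a Gray-code step costs $\cO(n\,|\supp(\hat B[\beta])|)$, not constant time.
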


It is worth noting that the same framework also reproduces the Picard number four enumeration of~\cite{Choi-Jang-Vallee2024}.
This provides an independent verification of the data in~\cite{Choi-Jang-Vallee2024} by a different method.
The known values are summarized in Table~\ref{table:result}.

\begin{table}[ht]
    \caption{Known numbers of toric-colorable seeds of dimension $n-1$ and Picard number $p=m-n$.}
    \label{table:result}
    \centering
    \small
    \setlength{\tabcolsep}{5pt}
    \begin{tabular}{c c c c c c c}
    \toprule
    $n \backslash p$
    & $1$ & $2$ & $3$ & $4$ & $5$ & $6$ \\
    \midrule
    $1$  & $1$ &     &     &       &                  &                  \\
    $2$  &     & $1$ & $1$ & $1$   & $1$              & $1$              \\
    $3$  &     &     & $1$ & $4$   & $13$             & $49$             \\
    $4$  &     &     & $1$ & $21$  & $1{,}170^\dagger$ & $213{,}841^\dagger$ \\
    $5$  &     &     &     & $142$ & $\mathbf{198{,}846}$ & $*$          \\
    $6$  &     &     &     & $733$ & $*$              & $*$              \\
    $7$  &     &     &     & $1{,}190$ & $*$          & $*$              \\
    $8$  &     &     &     & $776$ & $*$              & $*$              \\
    $9$  &     &     &     & $243$ & $*$              & $*$              \\
    $10$ &     &     &     & $39$  & $*$              & $*$              \\
    $11$ &     &     &     & $4$   & $*$              & $*$              \\
    $12$--$26$ & &   &     &       & $*$              & $*$              \\
    $>27$      & &   &     &       &                  & $*$              \\
    \bottomrule
    \end{tabular}
\end{table}

Here a blank entry means that no such seed occurs, while an asterisk indicates that the corresponding enumeration has not yet been carried out.
The bold entry is the new main result of this paper.
The entries marked by the symbol~$^\dagger$ are the three-dimensional values first recorded here using Lutz's complete database of PL~$3$-spheres\footnote{\href{https://www3.math.tu-berlin.de/IfM/Nachrufe/Frank_Lutz/stellar/}{The Manifold Page}} and the Garrison--Scott colorability test~\cite{Garrison-Scott2003}.
The value $1{,}170^\dagger$ is also independently reproduced by our dynamic programming algorithm.
Finally, note that by the four-color theorem, every PL $2$-sphere is mod~$2$ toric-colorable. 
Among them, the wedge of an $(m-1)$-gon is the unique non-seed.
Therefore, the entries in the row $n=3$ are given by $a_{m}-1$, where $a_m$ denotes the number of PL~$2$-spheres with $m$ vertices, recorded as OEIS~A000109\footnote{\url{https://oeis.org/A000109}}.

\subsection*{Code and dataset availability}

    The code is available at \url{https://github.com/MVallee1998/Pic5_public}.
    The complete lists of toric-colorable seeds of Picard number~$4$ 
    ($3{,}153$ seeds), Picard number~$5$ accross all dimensions up to four ($200{,}030$ seeds in total, including the $198{,}846$ of dimension four established in Theorem~\ref{thm-main}), and Picard number~$6$ accross all dimensions up to three ($213{,}891$ seeds in total, obtained from the database of F. Lutz) are deposited at \href{https://doi.org/10.5281/zenodo.21106287}{doi:10.5281/zenodo.21106287}.

\section{Preliminaries}\label{sec:prelim}

\subsection{Simplicial complexes, PL~spheres, and wedge operation}

In all that follows, we let $[m]:=\{1,2,\ldots,m\}$ for a positive integer $m$.
A \emph{simplicial complex}~$K$ on the ambient vertex set $[m]$ is a collection of subsets of $[m]$ such that if $\sigma\in K$ and $\tau\subset \sigma$, then $\tau\in K$.
An element of $K$ is called a \emph{face} of $K$, and a face with $k+1$ elements is called a \emph{$k$-face}.
An element $v\in [m]$ is called a \emph{ghost} vertex of $K$ if $\{v\}\notin K$.
Let $K$ be a simplicial complex on $V$ and let $K'$ be a simplicial complex on $V'$.
We say that $K$ and $K'$ are \emph{isomorphic}, written $K\cong K'$, if there exists a bijection $\phi\colon V\to V'$ such that $K'=\{\phi(\sigma)\mid \sigma\in K\}$.

A simplicial complex $K$ is called a \emph{PL~sphere} if its geometric realization is piecewise-linearly homeomorphic to the boundary of a simplex, see~\cite{Rourke-Sanderson1972book} for more details.
Every PL~sphere is \emph{pure}.
This means that all its maximal faces, with respect to inclusion, have the same dimension.
If this dimension is $n-1$, then the maximal faces are called the \emph{facets}, and the faces of dimension~$n-2$ are called the \emph{ridges}.
The \emph{dimension} of a pure simplicial complex $K$ is the dimension of its maximal faces.
The \emph{Picard number} of a simplicial complex $K$ on $[m]$ with $k$ ghost vertices, and of dimension $n-1$ is defined as $m-n-k$.
In particular, if $K$ has no ghost vertices, then its Picard number is $m-n$.

Unless otherwise stated, PL~spheres considered in enumeration are assumed to have no ghost vertices. 
Ghost vertices are allowed mainly when we regard inclusions $K\subseteq M$ and links or contractions as complexes on an ambient vertex set.

The \emph{link} of a face $\sigma$ in $K$ is the subcomplex
$$
    K/\sigma :=\{\tau\in K \mid \sigma\cap \tau = \emptyset, \sigma \cup \tau \in K\}.
$$
We regard $K/\sigma$ as a simplicial complex on the ambient vertex set $[m]\setminus \sigma$.
In particular, it may have ghost vertices.
When $\sigma = \{v\}$, we denote the link by $K/v$.

The \emph{join} of two simplicial complexes $K_1$ and $K_2$ on disjoint vertex sets is the simplicial complex
$$
    K_1 \join K_2 := \{\sigma_1 \cup \sigma_2 \mid \sigma_1 \in K_1, \sigma_2 \in K_2\}.
$$
In all that follows, we let $I$ be a $1$-simplex with vertices $v_1$ and $v_2$, and $\partial I$ be its boundary, that is, the $0$-sphere.
The \emph{suspension} of a simplicial complex $K$ is the join of $K$ with a $0$-sphere.
The vertices $v_1$ and $v_2$ are called \emph{suspended vertices}.

The \emph{(simplicial) wedge operation} on a simplicial complex $K$ at a non-ghost vertex~$v$ of $K$ is the simplicial complex
$$
    \Wed_v(K) := (I \join K/v) \cup (\partial I \join (K \setminus v)),
$$
where $K \setminus v$ is the subcomplex of $K$ consisting of faces not containing $v$.
In that case, the vertices $v_1$ and $v_2$ are called \emph{wedged vertices}.
Intuitively, the wedge operation replaces the vertex $v$ with an edge $I$.
Faces in the link of $v$ are joined with this edge, while faces not containing $v$ are joined with the boundary $\partial I$.
The wedge operation increases both the dimension and the number of vertices of $K$ by one, hence preserving the Picard number.
It also preserves the PL~sphere property~\cite{Choi-Park2016Wedge}.
By iterating the wedge operation, we can obtain new simplicial complexes from a given one.
A PL~sphere $K$ is called a \emph{seed} if it is wedge-minimal, that is, if it cannot be obtained from a PL~sphere of smaller dimension by a sequence of wedge operations.
Hence, every PL~sphere is obtained through iterated wedges starting from a seed.
Note that wedging at either one of the two wedged vertices coming from a wedge at $v$ yields isomorphic simplicial complexes.

\subsection{Toric-colorable PL~spheres and characteristic maps}
A \emph{characteristic map} over an $(n-1)$-dimensional simplicial complex~$K$ on the vertex set $[m]$ is a map
$$
    \lambda\colon [m] \to \Z^n
$$
such that for each face $\sigma = \{i_1, \ldots, i_k\} \in K$, the set of vectors $\{\lambda(i_1), \ldots, \lambda(i_k)\}$ can be extended to a basis of $\Z^n$, a condition referred to as the \emph{nonsingularity condition}.
A pair $(K,\lambda)$ is called a \emph{characteristic pair}.
A simplicial complex $K$ is called \emph{toric-colorable} if there exists a characteristic map over $K$.

A \emph{mod~$2$ characteristic map} is a map $\lambda^\R\colon [m] \to \Z_2^n$
\footnote{The superscript~$\R$ follows the convention in toric topology, where such mod~$2$ characteristic maps arise in the study of real toric manifolds.} 
such that for each face $\sigma = \{i_1, \ldots, i_k\} \in K$, the set of vectors $\{\lambda^\R(i_1), \ldots, \lambda^\R(i_k)\}$ is linearly independent over $\Z_2$, a condition referred to as the \emph{mod~$2$ nonsingularity condition}.
If $K$ supports a mod~$2$ characteristic map, then we say that $K$ is \emph{mod~$2$ toric-colorable}.
In particular, if $K$ supports a characteristic map $\lambda$, then its mod~$2$ reduction is a mod~$2$ characteristic map over $K$, hence toric colorability implies mod~$2$ toric-colorability.

Let $W$ be part of a basis of $\Z^n$.
The \emph{projection} of $\Z^n$ with respect to $W$ is the quotient map
$$
    \pi_{/W}\colon \Z^n \to \Z^n / \langle w_1,\ldots,w_k \rangle
$$
followed by a chosen identification of the quotient with $\Z^{n-k}$.
Let $\lambda\colon [m] \to \Z^n$ be a map.
A subset $\sigma\subseteq [m]$ is called \emph{nonsingular with respect to $\lambda$} if the set of vectors $\{\lambda(i) \mid i \in \sigma\}$ is part of a basis of $\Z^n$.
Given a nonsingular subset $\sigma\subseteq [m]$, the projection of $\lambda$ with respect to $\sigma$ is the composite map
$$
    \begin{array}{rcl}
        \Proj_\sigma \lambda \colon& [m]\setminus \sigma &\to \Z^{n-|\sigma|}\\
            & i &\mapsto \pi_{/\{\lambda(v) \mid v \in \sigma\}}(\lambda(i)).
    \end{array}
$$
In particular, if $\lambda$ is a characteristic map over $K$ and $\sigma$ is a face of $K$, then $\Proj_\sigma \lambda$ is a characteristic map over $K /\sigma$.

The same definition applies over $\Z_2$.
If $\lambda^\R\colon [m]\to\Z_2^n$ and $\sigma$ is nonsingular, then $\Proj_\sigma\lambda^\R$ is obtained by quotienting $\Z_2^n$ by the span of $\{\lambda^\R(i)\mid i\in\sigma\}$.

\subsection{A binary matroid rephrasing}

A \emph{matroid} is a pure simplicial complex which satisfies the following \emph{augmentation property}: if $\tau$ and $\sigma$ are two faces of the matroid such that $|\tau| < |\sigma|$, then there exists an element $x \in \sigma \setminus \tau$ such that $\tau \cup \{x\}$ is also a face of the matroid.
The underlying set of the matroid is called \emph{ground set}.
The faces of the matroid are called \emph{independent sets}, the maximal faces are called \emph{bases}, and their common size is called the \emph{rank} of the matroid.
Note that the rank of a matroid equals its dimension plus one.
An \emph{$\F$-representable matroid} is a matroid that can be represented as a collection of linearly independent subsets of columns of a matrix over the field $\F$.
A matroid is \emph{binary} if it is representable over the finite field $\Z_2$.

In particular, any given mod~$2$ map $\lambda^\R\colon [m]\to\Z_2^n$ encodes a binary matroid~$M_{\lambda^\R}$ on the ground set~$[m]$.
Its rank is at most $n$.
Moreover, the mod~$2$ nonsingularity condition with respect to an $(n-1)$-dimensional PL~sphere $K$ is reinterpreted as an inclusion of simplicial complexes
$$K \subseteq M_{\lambda^\R}.$$
Therefore, $K$ is mod~$2$ toric-colorable if and only if there exists a binary matroid~$M$ of rank $n$ on the ground set $[m]$ such that $K \subseteq M$.

The projection of mod~$2$ characteristic maps can also be rephrased in terms of binary matroids, as follows.
\begin{proposition}\label{prop:projection_matroid}
    Let $\lambda^\R\colon [m]\to \Z_2^n$ be a mod~$2$ map, and let $M_{\lambda^\R}$ be its associated binary matroid.
    Suppose that $\sigma\subset [m]$ is nonsingular with respect to $\lambda^\R$.
    After identifying both ground sets with $[m]\setminus\sigma$, we have
    $$
        M_{\Proj_\sigma \lambda^\R}=M_{\lambda^\R}/\sigma.
    $$
\end{proposition}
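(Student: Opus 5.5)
The plan is to compare independent sets on both sides directly, using the standard description of contraction by an independent set. Recall that for a matroid $M$ on $[m]$ and an independent set $\sigma$, the contraction $M/\sigma$ is the matroid on $[m]\setminus\sigma$ whose independent sets are the $\tau\subseteq[m]\setminus\sigma$ with $\tau\cup\sigma$ independent in $M$. This agrees with the link convention $K/\sigma$ used earlier, viewing a matroid as its independence complex. Since $\sigma$ is nonsingular with respect to $\lambda^\R$, i.e.\ independent in $M_{\lambda^\R}$, this description applies. The statement therefore reduces to the following claim: for $\tau\subseteq[m]\setminus\sigma$, the vectors $\{\pi(\lambda^\R(i))\mid i\in\tau\}$ are linearly independent in $\Z_2^n/W$ if and only if $\{\lambda^\R(i)\mid i\in\tau\cup\sigma\}$ is linearly independent in $\Z_2^n$. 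Here $W=\langle\lambda^\R(v)\mid v\in\sigma\rangle$ and $\pi$ is the quotient map. The chosen identification $\Z_2^n/W\cong\Z_2^{n-|\sigma|}$ is a linear isomorphism, so it preserves linear independence and can be ignored.

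To prove the claim, first handle repeated images. If two elements of $\tau$ have the same image under $\pi$, then both sides are dependent. I will treat $\tau$ as an indexed family throughout, so that this case is covered automatically. For the forward direction, suppose $\sum_{i\in\tau}a_i\lambda^\R(i)+\sum_{v\in\sigma}b_v\lambda^\R(v)=0$. Applying $\pi$ gives $\sum a_i\pi\lambda^\R(i)=0$, so every $a_i=0$. The relation then reduces to one among the $\lambda^\R(v)$ with $v\in\sigma$, and since $\sigma$ is independent, every $b_v=0$. For the converse, suppose $\sum a_i\pi\lambda^\R(i)=0$. Then $\sum a_i\lambda^\R(i)\in W$, so it equals $\sum b_v\lambda^\R(v)$ for some coefficients $b_v$. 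This is a linear relation on $\tau\cup\sigma$, and independence of $\tau\cup\sigma$ forces all $a_i=0$.

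I expect no real obstacle; the argument is routine linear algebra. The only delicate point is bookkeeping. The paper identifies a matroid with its independence complex on an ambient vertex set, so loops become ghost vertices. I must check that both sides use the same ground set $[m]\setminus\sigma$. I must also check that elements $i$ with $\lambda^\R(i)\in W$ are handled consistently: they are loops of $\Proj_\sigma\lambda^\R$, and on the other side $\sigma\cup\{i\}$ is dependent. These elements appear as ghost vertices on both sides, so the ground sets match. Finally, the rank of $M/\sigma$ equals $\rank M-|\sigma|$, which is consistent with the target space $\Z_2^{n-|\sigma|}$.
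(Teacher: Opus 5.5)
Your proposal is correct and takes the same route as the paper. Both arguments use the description of $M_{\lambda^\R}/\sigma$ for independent $\sigma$: a set $\tau\subseteq[m]\setminus\sigma$ is independent exactly when $\tau\cup\sigma$ is independent in $M_{\lambda^\R}$. Both then observe that independence of the projected vectors is equivalent to independence of $\tau\cup\sigma$; the paper takes this as immediate from the definition of the projection, while you check both directions of the linear algebra and the bookkeeping for repeated images and loops explicitly.
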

\begin{proof}
Let $\tau\subset [m]\setminus\sigma$.
By the definition of the projected map, the vectors $\{(\Proj_\sigma\lambda^\R)(i)\mid i\in\tau\}$ are linearly independent if and only if $\{\lambda^\R(i)\mid i\in \tau\cup\sigma\}$ are linearly independent.
Thus $\tau$ is independent in $M_{\Proj_\sigma\lambda^\R}$ if and only if $\tau\cup\sigma$ is independent in $M_{\lambda^\R}$.
This is exactly the definition of the \emph{contraction} $M_{\lambda^\R}/\sigma$ in matroid theory, see~\cite[Proposition~3.2.4]{Oxley2011book}.
\end{proof}

Hence, suppose that $K\subseteq M$, where $K$ is a PL~sphere and $M$ is a binary matroid.
For every face $\sigma$ of $K$, we obtain the following diagram.
$$
    \begin{tikzcd}
        K \arrow[r, hook] \arrow[d,hookleftarrow] & M \arrow[d, hookleftarrow] \\
        K /\sigma \arrow[r, hook] & M/ \sigma
    \end{tikzcd}
$$
Here the vertical arrows indicate the natural inclusions of links and contractions into the corresponding independence complexes.

These properties allow us to work with binary matroids instead of mod~$2$ characteristic maps.
In this language, projections of characteristic maps become contractions of binary matroids.
Equivalently, at the level of independence complexes, they become links of independent sets.

A \emph{circuit} is a minimal dependent set.
Equivalently, it is a minimal non-face of the matroid.
A \emph{loop} in a matroid is a circuit of size one, that is, an element that is not part of any basis.
In the context of binary matroids, a loop corresponds to a column of the representing matrix that is all zero.
Two elements in a circuit of size two are called \emph{parallel}, and for binary matroids, they correspond to two identical nonzero columns in the representing matrix.
A matroid is called \emph{simple} if it has no loops or parallel elements.

The \emph{dual} of a matroid~$M$ on the ground set~$E$ is the matroid~$M^\ast$ on the same ground set $E$ whose bases are the complements of the bases of $M$.
A \emph{coloop} is a loop of the dual matroid $M^\ast$, or equivalently, an element that is contained in every basis of $M$.
A cocircuit is a circuit of the dual matroid $M^\ast$.
Two elements in a cocircuit of size two are called \emph{series elements}.
A matroid is called \emph{cosimple} if it has no coloops or series elements.

We have the following elementary facts, see \cite[3.1.1, 3.1.5]{Oxley2011book}.
\begin{proposition}\label{proposition:contraction_matroids}
    Let $M$ be a cosimple binary matroid on the ground set $[m]$ with corank~$p$.
    If $v\in[m]$ is not a loop of $M$, then $M/v$ is a cosimple binary matroid on $m-1$ elements with corank $p$.
    More generally, if $\sigma$ is an independent set of $M$, then $M/\sigma$ is a cosimple binary matroid on $m-|\sigma|$ elements with corank $p$.
\end{proposition}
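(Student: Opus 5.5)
The plan is to pass to the dual matroid, where contraction becomes deletion and cosimplicity becomes simplicity, and then use the fact that deleting an element cannot create small circuits.

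\textbf{Setup.} Let $r=\rank M$, so $p=m-r$ is the corank. Recall two standard facts from \cite{Oxley2011book}.
\begin{itemize}
\item[(a)] For any $v$ we have $(M/v)^\ast = M^\ast\setminus v$.
\item[(b)] The class of binary matroids is closed under taking minors and duals.
\end{itemize}
In particular, $M/v$ is binary. Concretely, if $\lambda^\R$ represents $M$, then Proposition~\ref{prop:projection_matroid} gives $M/v=M_{\Proj_v\lambda^\R}$.

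\textbf{Corank.} Since $v$ is not a loop, $\{v\}$ is independent. A set $\tau\subseteq[m]\setminus v$ is independent in $M/v$ exactly when $\tau\cup\{v\}$ is independent in $M$. Hence bases of $M/v$ are $B\setminus v$ for bases $B\ni v$ of $M$. Such bases exist by augmentation applied to $\{v\}$. Therefore $\rank(M/v)=r-1$, and the corank of $M/v$ is $(m-1)-(r-1)=p$.

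\textbf{Cosimplicity.} By definition, $M$ is cosimple exactly when $M^\ast$ is simple, that is, $M^\ast$ has no circuits of size $1$ or $2$. By (a), it suffices to show that $M^\ast\setminus v$ is simple. The circuits of a deletion $N\setminus v$ are precisely the circuits of $N$ not containing $v$, since independence in $N\setminus v$ is independence in $N$ restricted to $[m]\setminus v$. So every loop or parallel pair of $M^\ast\setminus v$ would be a loop or parallel pair of $M^\ast$, and there are none. Hence $M/v$ is cosimple and binary on $m-1$ elements with corank $p$.

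\textbf{General independent $\sigma$.} We argue by induction on $|\sigma|$, using $M/\sigma=(M/v)/(\sigma\setminus v)$ for any $v\in\sigma$. Since $\sigma$ is independent, $v$ is not a loop, and $\sigma\setminus v$ is independent in $M/v$ by the description of independent sets in a contraction. The single-element case then applies repeatedly. Each step removes one element and preserves the corank, giving a cosimple binary matroid on $m-|\sigma|$ elements with corank $p$.

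\textbf{Main point to check.} The only step needing care is the duality identity (a) together with the circuit description of deletions. Both are standard and can be cited from \cite[3.1.1, 3.1.5]{Oxley2011book}.
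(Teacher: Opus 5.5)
Your proposal is correct and follows essentially the same route as the paper: the corank is preserved because contracting a non-loop drops the rank by one, cosimplicity follows from $(M/v)^\ast = M^\ast\setminus v$ together with the fact that deletion preserves simplicity, and the general case is handled by induction on $|\sigma|$. You also spell out two points the paper leaves implicit: that being binary is preserved, and that $\sigma\setminus v$ remains independent in $M/v$.
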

\begin{proof}
    Since $v$ is not a loop, we have $\rank(M/v)=\rank(M)-1$.
    Hence $\corank(M/v)=(m-1)-(\rank(M)-1)=\corank(M) = p$.
    Moreover, $(M/v)^\ast=M^\ast\setminus v$.
    Since $M$ is cosimple, $M^\ast$ is simple.
    Deletion preserves simplicity.
    Thus $(M/v)^\ast$ is simple, and so $M/v$ is cosimple.
    The general case is done by induction on $|\sigma|$.
\end{proof}

A convenient way to represent a binary matroid and its dual is to use the following standard form of their representing matrices which in turn correspond to mod~$2$ maps.
\begin{proposition}\label{prop:standard_form_matroid}
    Let $M$ be a binary matroid of rank $n$ on the ground set $[m]$.
    There exists a basis $B \subseteq [m]$ of $M$ ($|B|=n$) such that, after reordering the columns and applying elementary row operations, the representing matrix of $M$ is of the form
    $$
        \begin{bNiceArray}{c|c}[first-row]
             B & [m]\setminus B \\
            \bI_n & A
        \end{bNiceArray},
    $$
    where $\bI_n$ is the $n \times n$ identity matrix, and $A$ is an $n \times (m-n)$ matrix over $\Z_2$.
    
    Moreover, a representing matrix of the dual matroid $M^\ast$ is given by
    $$
        \begin{bNiceArray}{c|c}[first-row]
            B & [m]\setminus B \\
            A^\top & \bI_{m-n}
        \end{bNiceArray},
    $$
    and is called a \emph{Gale dual} matrix.
\end{proposition}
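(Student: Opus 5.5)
We prove Proposition~\ref{prop:standard_form_matroid} in two stages: first the standard form for $M$ by Gaussian elimination over $\Z_2$, then the dual representation by checking bases through a determinant identity.

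\emph{Standard form for $M$.} Let $N$ be any $\Z_2$-matrix representing $M$. Row operations do not change which column sets are independent, so we first discard dependent rows. This leaves a matrix with exactly $n=\rank(M)$ rows that still represents $M$. Choose any basis $B$ of $M$. The columns indexed by $B$ then form an invertible $n\times n$ submatrix $N_B$. Multiplying on the left by $N_B^{-1}$ is a sequence of elementary row operations, and it turns the $B$-columns into $\bI_n$. Reordering the columns so that $B$ comes first gives $[\bI_n\mid A]$, where $A=N_B^{-1}N_{[m]\setminus B}$.

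\emph{Dual representation.} Let $N^\ast=[A^\top\mid \bI_{m-n}]$, with columns indexed as in the statement. We must show that a set $X$ with $|X|=m-n$ is a basis of $M^\ast$, meaning $[m]\setminus X$ is a basis of $M$, exactly when the columns of $N^\ast$ indexed by $X$ are independent. Write $Y=[m]\setminus X$ and split it as $Y=(B\cap Y)\cup (Y\setminus B)$. We need the identity
$$
    \det N_Y = \pm\det N^\ast_X .
$$
Set $S=B\setminus Y$ and $T=Y\setminus B$; these have the same size. Expanding along the identity columns indexed by $B\cap Y$ reduces $\det N_Y$ to $\pm\det A[S,T]$, the minor of $A$ with rows $S$ and columns $T$. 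Similarly, the complementary set $X$ consists of $S\subseteq B$ together with $([m]\setminus B)\setminus T$. Expanding $\det N^\ast_X$ along its identity columns leaves the minor of $A^\top$ with rows $T$ and columns $S$, which is $\det A[S,T]^\top=\det A[S,T]$. Over $\Z_2$ signs are irrelevant, so both determinants are nonzero simultaneously.

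The main point is careful index bookkeeping in this determinant identity. Everything else is routine. One could also quote the standard result \cite[Theorem~2.2.8]{Oxley2011book} instead of proving it.
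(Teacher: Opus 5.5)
The paper does not prove this proposition. It states it as a standard fact: the duality theorem for vector matroids, \cite[Theorem~2.2.8]{Oxley2011book}, specialised to $\Z_2$, where $-A^\top=A^\top$. So there is no paper proof to compare against. Your argument is correct and is the standard textbook one.

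Your Gaussian elimination works for \emph{any} basis $B$, which is slightly more than the existence the statement claims. It also handles representations with redundant rows. In the duality part, $[m]\setminus X$ being a basis of $M$ and the $X$-columns of $[A^\top\mid\bI_{m-n}]$ being independent both reduce to nonsingularity of the same minor $A[S,T]$. That is the heart of the matter, and your index bookkeeping is right.

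Two points should be stated explicitly rather than left implicit:
\begin{itemize}
\item In the second stage, $N$ must mean the reduced matrix $[\bI_n\mid A]$, not the original representation. The rows of $A$ are indexed by $B$ and the rows of $A^\top$ by $[m]\setminus B$, via the identity blocks.
\item The matroid of $[A^\top\mid\bI_{m-n}]$ has rank $m-n$ because it contains $\bI_{m-n}$. Hence its bases are exactly its independent $(m-n)$-subsets, and agreement of bases with those of $M^\ast$ gives equality of the two matroids.
\end{itemize}
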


\begin{example}
    Let us consider the following matrix in $\Z_2$, and its Gale dual:
    $$
        \begin{bNiceArray}{ccc|cc}[first-row]
            1 & 2 & 3 & 4 & 5 \\
            1 & 0 & 0 & 0 & 0\\
            0 & 1 & 0 & 1 & 1\\
            0 & 0 & 1 & 1 & 1
        \end{bNiceArray}\quad \text{ and } \quad 
        \begin{bNiceArray}{ccc|cc}[first-row]
            1 & 2 & 3 & 4 & 5 \\
            0 & 1 & 1 & 1 & 0\\
            0 & 1 & 1 & 0 & 1
        \end{bNiceArray}.
    $$
    The associated binary matroid $M$ has ground set $\{1,2,3,4,5\}$ and its bases are 
    $$
        \{123,124,125,134,135\}.
    $$
    Here, for instance, $123$ denotes the subset $\{1,2,3\}$.
    The circuits of $M$ are $\{234,235,45\}$, hence the elements $4$ and $5$ are parallel.

    The dual matroid $M^\ast$ has the same ground set $\{1,2,3,4,5\}$.
    The bases of $M^\ast$ are $$\{24,25,34,35,45\}.$$
    The cocircuits of $M$ are $\{1,23,245,345\}$, hence $1$ is a coloop of $M$ and the elements~$2$ and~$3$ are in series.
    Finally, $M$ is not cosimple, since it has a coloop and a pair of series elements.
\end{example}
In all that follows, we will say that a mod~$2$ map is cosimple when its associated binary matroid is.

Choi and Park proved a statement equivalent to the following proposition in~\cite{Choi-Park2017WedgeOperationsTorus}.
\begin{proposition}\label{prop:seed_cosimple}
    Let $K$ be a mod~$2$ toric-colorable PL~sphere without ghost vertices and $\lambda^\R$ be a mod~$2$ characteristic map over $K$.
    If two vertices $v_1$ and $v_2$ of $K$ are in series with respect to the associated binary matroid $M_{\lambda^\R}$, then the pair $\{v_1, v_2\}$ is either a pair of wedged vertices or a pair of suspended vertices.
\end{proposition}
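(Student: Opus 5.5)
The plan is to translate the series condition into a purely combinatorial condition on the facets of $K$, and then to recognize $K$ directly as a suspension or a wedge by a ridge-counting (pseudomanifold) argument. First I extract what series means for $K$. Write $M=M_{\lambda^\R}$, which has rank $n$, and suppose $\{v_1,v_2\}$ is a cocircuit of $M$, i.e.\ a circuit of $M^\ast$. Since the bases of $M^\ast$ are the complements of the bases of $M$, the set $\{v_1,v_2\}$ is dependent in $M^\ast$. So it lies in no basis of $M^\ast$, and hence every basis of $M$ contains $v_1$ or $v_2$. Every facet $F$ of $K$ has $n$ elements and is independent in $M$ because $K\subseteq M$, so $F$ is a basis of $M$. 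Consequently \emph{every facet of $K$ contains $v_1$ or $v_2$}. Only this consequence will be used; note that it is symmetric in $v_1$ and $v_2$.

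The key step is a pairing lemma for facets containing exactly one of the two vertices. Let $F$ be a facet with $v_1\in F$ and $v_2\notin F$, and put $\rho=F\setminus\{v_1\}$, a ridge containing neither $v_1$ nor $v_2$. A PL sphere is a pseudomanifold, so $\rho$ lies in exactly two facets, $F$ and $\rho\cup\{w\}$ with $w\neq v_1$. The facet $\rho\cup\{w\}$ must contain $v_1$ or $v_2$. Since $\rho$ contains neither and $w\ne v_1$, we get $w=v_2$. Hence $\rho\cup\{v_2\}$ is a facet, and symmetrically with the roles of $v_1$ and $v_2$ swapped. The facets of $K$ therefore split into two families:
\[
\{\sigma\cup\{v_1\},\ \sigma\cup\{v_2\}\}\ \text{ for } \sigma\in\mathcal S,
\qquad
\{v_1,v_2\}\cup\tau\ \text{ for } \tau\in\mathcal T,
\]
where $\mathcal S$ and $\mathcal T$ are families of sets avoiding $v_1$ and $v_2$.

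With this decomposition the proof splits into two cases.

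\emph{Case 1: $\{v_1,v_2\}\notin K$.} Then $\mathcal T=\emptyset$, and the facets of $K$ are exactly the facets of $\partial I\join L$ with $L=K/v_1$, where $\partial I$ has vertices $v_1,v_2$. So $K=\partial I\join (K/v_1)$, and $v_1,v_2$ are suspended vertices. The link $K/v_1$ of a vertex of a PL sphere is a PL sphere, so this is a genuine suspension of a PL sphere.

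\emph{Case 2: $\{v_1,v_2\}\in K$.} Let $L$ be $K/v_1$ with the vertex $v_2$ renamed $v$. Its facets are the sets $\tau\cup\{v\}$ for $\tau\in\mathcal T$ and the sets $\sigma$ for $\sigma\in\mathcal S$. Thus the facets of $L/v$ are the sets $\tau$, and the facets of $L\setminus v$ are the sets $\sigma$. Comparing with the definition $\Wed_v(L)=(I\join L/v)\cup(\partial I\join(L\setminus v))$ shows that $K$ and $\Wed_v(L)$ have the same facets, so $K=\Wed_v(L)$. The complex $L$ is a PL sphere, being a vertex link, and $v$ is a genuine vertex of $L$ because $\mathcal T\neq\emptyset$. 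Hence $v_1,v_2$ are wedged vertices. The hypothesis that $K$ has no ghost vertices guarantees that $v_1,v_2$ are actual vertices, so these links make sense.

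The main obstacle I expect is the bookkeeping in the wedge case. There one must check that the facets of $L\setminus v$ really are the sets $\sigma$ with the right dimension, i.e.\ that $L\setminus v$ is pure of the same dimension as $L$ when $\mathcal S\neq\emptyset$. One must also make sure that equality of facet sets suffices, since all the complexes involved are pure. The pairing lemma is what makes this clean, and it rests only on the pseudomanifold property of PL spheres together with the ``every facet meets $\{v_1,v_2\}$'' consequence of the series condition.
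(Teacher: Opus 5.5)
Your proof is correct. The paper gives no proof of its own: it defers to Choi and Park, who phrase the result via characteristic maps and their mod~$2$ Gale duals. There the hypothesis is two vertices with equal nonzero Gale-dual columns, which is exactly your series condition, since $M^\ast$ is represented by $[A^\top\mid\bI_{m-n}]$.

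The citation keeps the paper short and ties the result to its use in the finiteness bound $m\le 2^p-1$. Your argument instead isolates the only two facts actually needed:
\begin{itemize}
\item the series condition enters only through ``every basis of $M$, hence every facet of $K$, meets $\{v_1,v_2\}$'';
\item the PL structure enters only through ``every ridge lies in exactly two facets''.
\end{itemize}
So you in fact prove the suspension/wedge dichotomy for any pure complex with this pseudomanifold property. The PL-sphere hypothesis is used only to know that $K/v_1$ is again a PL sphere. This makes the proposition self-contained.

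The step you flagged in Case~2 is harmless, but the pairing lemma alone does not settle it. There are two easy fixes.
\begin{itemize}
\item Compare faces rather than facets. A face $\beta$ of $L\setminus v$ lies in a facet of $L$, which is either some $\sigma$ or some $\tau\cup\{v\}$. In the latter case $\beta\subseteq\tau$, so $\{v_i\}\cup\beta\subseteq\{v_1,v_2\}\cup\tau$ already lies in $I\join(L/v)$. Hence $\Wed_v(L)$ is generated by the sets $\{v_1,v_2\}\cup\tau$ and $\{v_i\}\cup\sigma$, whether or not $L\setminus v$ is pure.
\item Apply your ridge argument once more, to $\{v_1\}\cup\tau$. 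Its second facet is $\{v_1\}\cup\tau\cup\{w\}$ with $w\neq v_2$, hence of the form $\sigma\cup\{v_1\}$. So every $\tau$ lies in some $\sigma$, and $L\setminus v$ is pure with facet set $\mathcal S\neq\emptyset$.
\end{itemize}

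Two minor bookkeeping points. First, $\rank M=n$ because a facet of $K$ gives $n$ independent vectors in $\Z_2^n$. Second, in Case~1 the link $K/v_1$ should be regarded on $[m]\setminus\{v_1,v_2\}$: $v_2$ is a ghost vertex of it, and the join requires disjoint vertex sets.
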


By convention, the boundary of an interval is considered as a suspended seed.
\begin{corollary}\label{cor:cosimple}
    Let $K$ be a nonsuspended mod~$2$ toric-colorable seed without ghost vertices and $\lambda^\R$ be a mod~$2$ characteristic map over $K$.
    Then, the associated binary matroid $M_{\lambda^\R}$ is cosimple.
\end{corollary}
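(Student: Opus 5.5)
We must show that $M=M_{\lambda^\R}$ has no coloops and no series pairs. The series condition follows quickly from Proposition~\ref{prop:seed_cosimple}; the coloop condition needs a short separate argument. Throughout, $K$ is an $(n-1)$-dimensional PL~sphere on $[m]$ without ghost vertices, and $K\subseteq M$ with $\rank M=n$, since each facet of $K$ is a basis of $M$.

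\emph{No coloops.} Suppose $v$ were a coloop of $M$, so $v$ lies in every basis of $M$. Every facet of $K$ is a basis of $M$, so $v$ would lie in every facet of $K$. Then $K$ would be a cone with apex $v$, that is, $K=v\join K/v$. This is impossible for a PL~sphere: a cone is contractible and has trivial top homology. The only exception is dimension $-1$, which does not arise here. Note also that $\partial I$ has no vertex common to both facets, so the convention on intervals causes no trouble. Hence $M$ has no coloops.

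\emph{No series elements.} Suppose $v_1,v_2$ are in series in $M$. Both are vertices of $K$, since there are no ghost vertices. By Proposition~\ref{prop:seed_cosimple}, $\{v_1,v_2\}$ is either a pair of wedged vertices or a pair of suspended vertices.
\begin{itemize}
\item If it is a wedged pair, then $K\cong\Wed_v(L)$ for some PL~sphere $L$ of one dimension lower. Here we use that the link of an edge in a PL~sphere is a PL~sphere, and that $L$ is recovered by collapsing the edge $\{v_1,v_2\}$. This contradicts $K$ being a seed.
\item If it is a suspended pair, then $K=\partial I\join L$ is a suspension, contradicting the hypothesis that $K$ is nonsuspended.
\end{itemize}
Hence no series pair exists.

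The main point needing care is the wedged-pair case: one must confirm that the conclusion of Proposition~\ref{prop:seed_cosimple} means literally $K=\Wed_v(L)$ with $L$ a PL~sphere, so that the seed (wedge-minimality) hypothesis applies. I would take this from the statement of Choi--Park as cited. The remaining checks are the convention that the boundary of an interval is a suspended seed, which excludes the degenerate $0$-sphere case (where $v_1,v_2$ would be both suspended and in series), and the remark that cosimplicity is a property of $M$ alone, independent of the chosen representation $\lambda^\R$.
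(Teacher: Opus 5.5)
Your proposal is correct and follows the paper's proof. The paper excludes coloops because every facet of $K$ is a basis and no vertex of a PL~sphere lies in every facet, and excludes series pairs by Proposition~\ref{prop:seed_cosimple} together with the seed and nonsuspended hypotheses, exactly as you do. One small fix in the wedged-pair case: the fact that $L$ is a PL~sphere comes from $L\cong K/v_1$, a vertex link, not from the link of the edge $\{v_1,v_2\}$, which is only $L/v$.
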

\begin{proof}
    Since $K\subseteq M_{\lambda^\R}$ and $K$ has dimension $n-1$, every facet of $K$ is a basis of $M_{\lambda^\R}$.
    Hence, $M_{\lambda^\R}$ has no coloop since no vertex of a PL~sphere without ghost vertices is contained in every facet.
  
    If two vertices were in series, then by Proposition~\ref{prop:seed_cosimple} they would be either wedged vertices or suspended vertices.
    This is impossible because $K$ is a nonsuspended seed.
    Hence $M_{\lambda^\R}$ is cosimple.
\end{proof}

If $M$ has rank $n$ on a ground set of size $m$, its \emph{corank} is $m-n$.

\begin{lemma}\label{lem:finiteness_cosimple_matroid}
    There is a finite number of cosimple binary matroids of fixed corank~$p$.
    In particular, their ground sets have at most $2^p - 1$ elements.
    Up to isomorphism, they correspond to the equivalence classes of projective binary codes of length $m$ and dimension $p$.
    Their numbers are listed in Tables~\ref{tab:cosimple_matroid_small_rho_2_3_5} and~\ref{tab:cosimple_matroid_small_rho_5}.
\end{lemma}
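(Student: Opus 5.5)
The plan is to pass to the dual matroid and count simple binary matroids of fixed rank. Let $M$ be a cosimple binary matroid of corank $p$ on the ground set $[m]$. By definition $M^\ast$ has rank $m-n=p$. By Proposition~\ref{prop:standard_form_matroid}, $M^\ast$ is again binary and is represented by a $p\times m$ Gale dual matrix $G=[A^\top\mid \bI_p]$ over $\Z_2$. Cosimplicity of $M$ is, by definition, simplicity of $M^\ast$. In terms of columns of $G$, this means no column is zero (no loops) and no two columns coincide (no parallel pairs, since two equal nonzero columns over $\Z_2$ form a circuit of size two, and conversely). So the columns of $G$ are $m$ pairwise distinct nonzero vectors of $\Z_2^p$. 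Hence
$$
    m \le |\Z_2^p\setminus\{0\}| = 2^p-1 .
$$

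Finiteness then follows. $M$ is determined by $M^\ast$, since duality is an involution. $M^\ast$ is determined by the $p\times m$ binary matrix $G$. For each $m\le 2^p-1$ there are at most $2^{pm}$ such matrices, so there are only finitely many cosimple binary matroids of corank $p$, even before passing to isomorphism classes.

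For the correspondence with codes, I would take the row space $C\subseteq\Z_2^m$ of $G$. It is a binary linear code of length $m$. Its dimension is $p$, because $G$ contains $\bI_p$ and so has rank $p$. A generator matrix of $C$ has distinct nonzero columns exactly when $C$ is \emph{projective}, that is, when the dual code has minimum distance at least $3$. This matches simplicity of $M^\ast$ as shown above.

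The remaining step, and the only one needing care, is that the two notions of isomorphism agree. Two generator matrices of the same code differ by left multiplication by $\mathrm{GL}_p(\Z_2)$, which does not change the matroid. Conversely, the matroid of $G$ determines its code up to this action, because a binary matroid is uniquely representable up to row operations and column scaling, and over $\Z_2$ column scaling is trivial. Code equivalence over $\Z_2$ is column permutation, which is exactly matroid isomorphism. So the isomorphism classes of $M$, equivalently of $M^\ast$, are in bijection with equivalence classes of projective binary $[m,p]$ codes.

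The numerical values in the tables are then a computational enumeration of these code classes and are not part of the proof. I expect the main obstacle to be the appeal to unique representability of binary matroids. I would cite the standard result in Oxley for it rather than reprove it.
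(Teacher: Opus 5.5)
Your proposal is correct and takes the same route as the paper. You dualize to a simple binary matroid of rank $p$, read off $m$ pairwise distinct nonzero columns in $\Z_2^p$ to get $m\le 2^p-1$ and finiteness, and identify full-rank $p\times m$ representing matrices up to row operations and column permutations with projective binary $[m,p]$ codes. Your explicit appeal to unique representability of binary matroids (any two full-row-rank $\Z_2$-representations differ by an invertible row operation) makes precise a step the paper states without justification, namely that matroid isomorphism classes coincide with code equivalence classes.
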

\begin{proof}
    The dual of a cosimple binary matroid is simple.
    Hence a representing matrix of the dual matroid has $m$ nonzero and pairwise distinct columns in $\Z_2^p$.
    Therefore $m\le 2^p-1$.
    
    Conversely, every full-row-rank $p \times m$ binary matrix with nonzero and pairwise distinct columns represents a simple binary matroid of rank~$p$, and its dual is a cosimple binary matroid of corank~$p$.
    In rank $p$, such matrices are considered up to row operations and column permutations, which is the same equivalence used for projective binary codes of length $m$ and dimension $p$.
\end{proof}

\begin{table}[h]
    \centering
    \begin{tabular}{*{16}{c}}
        \toprule[1.5pt]
        $m$ &1&2&3&4&5&6&7&8&9&10&11&12&13&14&15 \\ \midrule[.1pt]
        $p=2$ & & 1 & 1\\ 
        $p=3$ & & & 1 & 2 & 1 &1&1\\ 
        $p=4$ & & &   & 1 & 3 & 4 & 5 & 6 & 5 & 4 & 3 & 2 & 1 & 1 & 1 \\ \bottomrule[1.5pt]
    \end{tabular}
    \medskip

    \caption{Number of cosimple binary matroids of small corank $p=2,3,4$ on~$m$ elements, up to isomorphism (see~\url{https://www.mathe2.uni-bayreuth.de/frib/codes/tables_8.html}).\label{tab:cosimple_matroid_small_rho_2_3_5}}
    
\end{table}

\begin{table}[h]
    \centering
    \begin{tabular}{*{16}{c}}
        \toprule[1.5pt]
        $m$& 5&6&7&8&9&10&11&12&13&14&15 & 16 & 17 & 18 \\ \midrule[.1pt]
        $p=5$ & 1 & 4 & 8 & 15 & 29 & 46 & 64 & 89 & 112 & 128 & 144 & 145 &129 &113\\\midrule[1.5pt]
        $m$ &19&20&21&22&23&24&25&26&27&28&29&30&31&  \\\midrule[.1pt]
        $p=5$  & 91 & 67 & 50 & 34 & 21 & 14 & 9 & 5 & 3 & 2 & 1&1 & 1&\\
        \bottomrule[1.5pt]
    \end{tabular}
    \medskip

    \caption{Number of cosimple binary matroids of corank $p=5$ on $m$ elements, up to isomorphism (see~\url{https://www.mathe2.uni-bayreuth.de/frib/codes/tables_8.html}).\label{tab:cosimple_matroid_small_rho_5}}
\end{table}

By Lemma~\ref{lem:finiteness_cosimple_matroid} and Proposition~\ref{prop:seed_cosimple}, we recover the following result of Choi and Park.
\begin{proposition}[\cite{Choi-Park2017WedgeOperationsTorus}]
    The number of mod~$2$ toric-colorable seeds with a fixed Picard number $p$ is finite.
\end{proposition}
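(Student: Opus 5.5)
The plan is to bound, for fixed $p$, the number of vertices of a mod~$2$ toric-colorable seed $K$ of Picard number $p$, and then use that for fixed vertex count there are only finitely many simplicial complexes. We take $K$ without ghost vertices, of dimension $n-1$ on $[m]$ with $m-n=p$. Let $\lambda^\R$ be a mod~$2$ characteristic map over $K$, and let $M=M_{\lambda^\R}$.

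First we handle the suspended case. If $K$ is a suspension $\partial I \join L$ with $L$ a PL~sphere, then $K$ is a seed only in restricted situations. By convention, the only suspended seed to consider is the boundary of an interval, or more generally the boundary of a cross-polytope coming from iterated suspension; this is the exceptional family mentioned before Corollary~\ref{cor:non_IDCM_colorable_seeds}. Since the cross-polytope boundary of Picard number $p$ has $2p$ vertices and dimension $p-1$, it contributes a single complex for each $p$. For a general suspension $\partial I\join L$, we want to show that $L$ is again a seed-like object of Picard number $p-1$ with suspension bookkeeping, so that we can induct on $p$. The simplest route: the suspended vertices $v_1,v_2$ are in series in $M$ only if every basis misses at most one of them. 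In any case, removing the suspension reduces $p$ by one, because $m$ drops by $2$ and $n$ by $1$. Hence, by induction on $p$, suspensions contribute finitely many complexes, provided the link $L$ is itself controlled. I expect this bookkeeping to be the fiddliest step. If necessary, one simply invokes the full statement of Choi--Park, which already absorbs it.

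Next, the main case: $K$ is a nonsuspended seed. By Corollary~\ref{cor:cosimple}, $M$ is cosimple, and its rank is $n$ because the facets of $K$ are bases. Hence its corank is $m-n=p$. By Lemma~\ref{lem:finiteness_cosimple_matroid}, this gives $m\le 2^p-1$ and hence $n\le 2^p-1-p$.

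Finally, for each of the finitely many $m\le 2^p-1$ there are at most $2^{2^m}$ simplicial complexes on $[m]$. So the set of such seeds is finite, and adding the finitely many suspended exceptions gives the claim. The main obstacle is the correct treatment of suspended seeds. Proposition~\ref{prop:seed_cosimple} only says series pairs come from wedges or suspensions, so one must check that a seed which is a suspension either is a cross-polytope boundary or still has a controlled number of vertices, for instance via the induction on $p$ above.
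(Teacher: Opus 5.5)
Your nonsuspended case is the paper's argument: Corollary~\ref{cor:cosimple} and Lemma~\ref{lem:finiteness_cosimple_matroid} give $m\le 2^p-1$. The gap is the suspended case. You flag it as the main obstacle but leave it open, and part of what you say about it is wrong.

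The convention before Corollary~\ref{cor:cosimple} only declares $\partial I$ itself to be a suspended seed. It does not say that cross-polytope boundaries are the only suspended seeds, and they are not. The suspension of the pentagon is a seed of dimension two and Picard number four (one of the four entries in that row of Table~\ref{table:result}), and Lemma~\ref{lemma:IDCM_suspension} produces many more. So ``adding the finitely many suspended exceptions'' does not cover all seeds that escape Corollary~\ref{cor:cosimple}. Your fallback of invoking ``the full statement of Choi--Park'' is circular, since that is exactly the proposition being proved.

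Your induction on $p$ is the right repair. It needs the step you leave as ``provided the link $L$ is itself controlled'': if $K=\partial I\join L$ is a mod~$2$ toric-colorable seed of Picard number $p$, then $L$ is a mod~$2$ toric-colorable seed of Picard number $p-1$. This goes as follows.
\begin{enumerate}
\item $L$ is the link of a suspension vertex. Hence it is a PL~sphere, and it is mod~$2$ toric-colorable by projecting $\lambda^\R$.
\item Its Picard number is $p-1$ by your count.
\item It is a seed because suspension commutes with wedging. For a vertex $v$ of $L'$ one has $(\partial I\join L')/v=\partial I\join (L'/v)$ and $(\partial I\join L')\setminus v=\partial I\join (L'\setminus v)$. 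Hence $\Wed_v(\partial I\join L')=\partial I\join \Wed_v(L')$, so $L=\Wed_v(L')$ would make $K$ a wedge.
\end{enumerate}
It follows that there are at most as many suspended seeds of Picard number $p$ as there are seeds of Picard number $p-1$, and induction on $p$ closes the argument.

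The paper's own justification is a one-line appeal to Lemma~\ref{lem:finiteness_cosimple_matroid} and Proposition~\ref{prop:seed_cosimple}. The suspended case is made explicit only later, in Lemma~\ref{lemma:IDCM_suspension} and Corollary~\ref{cor:non_IDCM_colorable_seeds}. These place every seed other than a cross-polytope boundary inside a cosimple binary matroid of corank $p$, which gives $m\le 2^p-1$ uniformly, plus one cross-polytope boundary per $p$. That route also tacitly uses the fact above, that desuspending a seed yields a seed.
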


We will use the following for initializing our final database.
\begin{lemma}\label{lemma:IDCM_suspension}
    Let $K$ be a mod~$2$ toric-colorable seed on $[m]$ of dimension $n-1$ and of Picard number $p=m-n$. If $K$ is included in a cosimple binary matroid, then the suspension of $K$ is a mod~$2$ toric-colorable seed also included in a cosimple binary matroid.
\end{lemma}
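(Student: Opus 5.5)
The plan is to add the two suspension vertices $a,b$ explicitly, together with one extra \emph{ghost} element $c$, to a representing matrix of $M$. We then check three things: the facets of the suspension are bases, the Picard number is right, and the dual is simple. The ghost element is forced by the following observation. If $\Sigma K=K\join\{a,b\}$ sits in a binary matroid $N$ on exactly $[m]\cup\{a,b\}$ of rank $n+1$, then $[m]$ spans a hyperplane of $N$, since $K$ has rank $n$. Its complement $\{a,b\}$ is then a cocircuit, so $a,b$ are in series, in accordance with Proposition~\ref{prop:seed_cosimple}. Hence the ambient cosimple matroid must contain at least one further element, which is a ghost vertex of $\Sigma K$. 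This is compatible with the Picard number convention $m-n-k$.

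Concretely, by Proposition~\ref{prop:standard_form_matroid}, write $M$ by the matrix $[\bI_n\mid A]$ with basis $B$. Here $A$ is $n\times p$, and the Gale dual $[A^\top\mid \bI_p]$ has nonzero, pairwise distinct columns because $M^\ast$ is simple. Let $N$ be the binary matroid on $[m]\cup\{a,b,c\}$ represented by the $(n+1)\times(m+3)$ matrix whose columns are:
\begin{itemize}
\item $(\lambda^\R(i),0)$ for $i\in[m]$;
\item $e_{n+1}$ for each of $a$, $b$ and $c$.
\end{itemize}
Every facet of $\Sigma K$ has the form $\sigma\cup\{a\}$ or $\sigma\cup\{b\}$ with $\sigma$ a facet of $K$, and such a set is a basis of $N$ because of the last coordinate. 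Since $\Sigma K$ is pure, this gives $\Sigma K\subseteq N$. The Picard number of $\Sigma K$ on $[m]\cup\{a,b,c\}$ is $(m+3)-(n+1)-1=p+1$, the Picard number of $\Sigma K$, which equals the corank of $N$ minus the single ghost.

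For cosimplicity, put $N$ in standard form with basis $B\cup\{a\}$. The matrix $A$ is then replaced by
$$A'=\begin{bmatrix} A & 0 & 0\\ 0 & 1 & 1\end{bmatrix},$$
whose last two columns correspond to $b$ and $c$. The dual representation $[A'^\top\mid \bI_{p+2}]$ has the following columns:
\begin{itemize}
\item $(A_i,0,0)$ for the rows $A_i$ of $A$;
\item $(0,\dots,0,1,1)$ for $a$;
\item the unit vectors $e_1,\dots,e_{p+2}$.
\end{itemize}
All of these are nonzero. They are pairwise distinct because the $A_i$ are nonzero, pairwise distinct and different from the unit vectors $e_1,\dots,e_p$ (simplicity of $M^\ast$), and because $(0,\dots,0,1,1)$ is not a unit vector. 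So $N^\ast$ is simple, i.e.\ $N$ is cosimple. Colorability of $\Sigma K$ follows directly from $\Sigma K\subseteq N$ and $\rank N=n+1$.

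The step I expect to be the main obstacle is showing that $\Sigma K$ is again a seed; everything matroidal above is routine. A pair $\{a,b\}$ of suspended vertices is not a wedged pair, since $\{a,b\}$ is not a face. So any wedged pair of $\Sigma K$ would either be $\{u,w\}\subseteq[m]$, or contain one of $a,b$. In the first case I would pass to the link $\Sigma K/a=K$. Writing $\Sigma K=\Wed(L)$ and using that links of wedges at vertices off the wedged edge are again wedges of links, I would conclude that $\{u,w\}$ is a wedged pair of $K$, contradicting that $K$ is a seed. The mixed case I would rule out by a direct check on links: a wedged pair $\{u,w\}$ satisfies $\Sigma K/u\setminus w\cong \Sigma K/w\setminus u$, and this fails when $w\in\{a,b\}$ and $u\in[m]$. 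If that check becomes delicate, I would instead rely on the known Choi--Park characterization of seeds as PL spheres with no wedged pair and verify the condition pair by pair.
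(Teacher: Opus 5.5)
Your matrix computation is correct, but the matroid $N$ it produces is not the one the lemma calls for. $N$ lives on $[m]\cup\{a,b,c\}$ with $c$ a ghost, so its corank is $p+2$, while $\Sigma K$ has Picard number $p+1$. In this lemma, ``included in a cosimple binary matroid'' means a cosimple matroid on the vertex set of the complex itself. The hypothesis is a matroid on $[m]$, and Corollary~\ref{cor:non_IDCM_colorable_seeds} uses the lemma to produce a cosimple matroid of corank $p$ \emph{on the same vertex set}. That is exactly what the enumeration over cosimple matroids of corank equal to the Picard number needs. If ghosts were allowed, the statement would say nothing: for instance $\partial I\subseteq U_{1,3}$ with one ghost, and $U_{1,3}$ is cosimple, yet cross-polytope boundaries are precisely the seeds excluded in that corollary.

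The argument you give for why $c$ is forced is false. That $K$ has dimension $n-1$ only says the faces of $K$ are independent in $N$. It does not give $\rank_N([m])=n$, so $\{a,b\}$ need not be a cocircuit. Also, Proposition~\ref{prop:seed_cosimple} says series pairs are wedged or suspended, not the converse.

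The repair, which is the paper's construction, is to give the non-basis columns of $[m]$ a nonzero last coordinate. On $B$, $[m]\setminus B$, $a$, $b$ take
\[
  \Lambda=\begin{bmatrix}\bI_n & A & 0 & 0\\ 0 & \mathbf t^\top & 1 & 1\end{bmatrix},\qquad \mathbf t\in\Z_2^p\setminus\{0\}.
\]
The paper allows a column $(\mathbf x,1)$ for $b$ and takes $\mathbf t$ to be a unit vector. Each facet $\sigma\cup\{a\}$ or $\sigma\cup\{b\}$ is still a basis, since its submatrix is block triangular with determinant $\det\lambda^\R(\sigma)$. In standard form with respect to $B\cup\{a\}$, the Gale dual has the following columns:
\begin{itemize}
\item $(A_i,0)$ for the rows $A_i$ of $A$;
\item $(e_j,0)$ for $j\in[m]\setminus B$;
\item $(\mathbf t,1)$ for $a$, and $e_{p+1}$ for $b$.
\end{itemize}
These are nonzero and pairwise distinct exactly because $M$ is cosimple and $\mathbf t\neq 0$. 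This gives a cosimple matroid of corank $p+1$ on exactly $[m]\cup\{a,b\}$, in which $[m]$ is spanning.

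On the seed property, which the paper simply asserts, your first case works. Because $\{u,w,a\}$ is a face, writing $\Sigma K=\Wed_v(L)$ gives $\{a,v\}\in L$, so taking the link at $a$ commutes with the wedge.

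In the mixed case, use equality rather than isomorphism. A wedged pair satisfies $\Sigma K/u\setminus w=\Sigma K/w\setminus u$ literally, since both equal $L\setminus v$. For $w=a$, the left side is the cone $(K/u)\ast b$, which has the vertex $b$, while $K\setminus u$ does not. The test up to isomorphism can fail to give a contradiction. For example, if $K$ is itself a suspension and $u$ is one of its suspended vertices, both sides are cones over the same complex.
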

\begin{proof}
    Let $K\subseteq M$, for $M$ a cosimple binary matroid on $[m]$, and let $B$ be a facet of $K$, hence a basis of~$M$, which is also a facet of $K$. The representing matrices of~$M$ and of its dual are respectively
    \[
        \begin{bNiceArray}{c|c}[first-row]
             {\scriptstyle B} & {\scriptstyle [m]\setminus B} \\
            \bI_n & A
        \end{bNiceArray},\quad \text{and} \quad
         \begin{bNiceArray}{c|c}[first-row]
            {\scriptstyle B} & {\scriptstyle [m]\setminus B} \\
            A^\top & \bI_{m-n}
        \end{bNiceArray},
    \]
    and since $M$ is cosimple, all the columns of $\begin{bmatrix}
         A^\top & \bI_{m-n}
    \end{bmatrix}$ are distinct.
    The suspension of $K$ with suspended vertices $m+1$ and $m+2$ is the join $K\ast \partial I$, for $\partial I=\{\emptyset,\{m+1\},\{m+2\}\}$ on $\{m+1,m+2\}$, and is a seed.
    A mod~$2$ characteristic map over the join of two simplicial complexes is given by~\cite[Lemma~3.1]{Choi-Park2016IJM}, and in our case, it is represented by
    \[
        \Lambda=\begin{bNiceArray}{c|c|cc}[first-row]
             {\scriptstyle B} & {\scriptstyle [m]\setminus B} & {\scriptstyle m+1} & {\scriptstyle m+2}\\
            \bI_n & A & \mathbf{0} & \mathbf{x}\\
            \mathbf{0} & \mathbf{a}^\top & 1 & 1
        \end{bNiceArray},
    \]
    for $\mathbf{x}$ and $\mathbf{a}$ some vectors, since the only mod~$2$ characteristic map over $\partial I$ is $\begin{bNiceArray}{cc}[first-row]
             {\scriptstyle m+1} & {\scriptstyle m+2}\\1 & 1
        \end{bNiceArray}$.
    A Gale dual of $\Lambda^\R$ is 
    \[
        {\Lambda}^*=\begin{bNiceArray}{c|c|cc}[first-row]
             {\scriptstyle B} & {\scriptstyle [m]\setminus B} & {\scriptstyle m+1} & {\scriptstyle m+2}\\
            A^\top  & \bI_{m-n} & \mathbf{a} & \mathbf{0}\\
            \mathbf{x}^\top & \mathbf{0}  & 1 & 1
        \end{bNiceArray}.
    \]
    Taking $\mathbf{a}$ being not a column of $A^\top$, for instance a vector of the canonical basis of $\Z_2^{m-n}$ which is not a column of $A^\top$ since $M$ is cosimple, we obtain that all the columns of ${\Lambda}^*$ are distinct, that is, the binary matroid associated to $\Lambda$ is cosimple.
\end{proof}

\begin{corollary}\label{cor:non_IDCM_colorable_seeds}
    Let $K$ be a mod~$2$ toric-colorable seed of Picard number $p$.
    Then, there exists a cosimple binary matroid of corank $p$ and on the same vertex set as $K$ if and only if $K$ is not the boundary of cross-polytopes.
\end{corollary}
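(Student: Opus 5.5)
The plan is to split seeds according to how many times they are suspended, handle the non-cross-polytope direction with Corollary~\ref{cor:cosimple} and Lemma~\ref{lemma:IDCM_suspension}, and rule out cross-polytope boundaries by a series-pair argument. Concretely, write a mod~$2$ toric-colorable seed $K$ as $K = L \join (\partial I)^{\join k}$, with $k\ge 0$ maximal, so that $L$ is nonsuspended. Here we use the convention that $\partial I$ is itself suspended, so $L$ is either the empty complex $\{\emptyset\}$ or a genuinely nonsuspended complex. The case $L=\{\emptyset\}$ is exactly the case where $K$ is the boundary of a cross-polytope.

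First I would check that $L$ is again a mod~$2$ toric-colorable seed.

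Colorability: $L$ is an iterated link in $K$ (the link of a face made of one suspended vertex from each suspension factor). So $\Proj$ of a characteristic map over $K$ gives one over $L$, by Proposition~\ref{prop:projection_matroid} and the remark preceding it.

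Seed property: wedges commute with suspension, that is,
$$
\Wed_v(L')\join\partial I = \Wed_v(L'\join\partial I).
$$
Hence if $L$ were a wedge, $K$ would be a wedge too, contradicting that $K$ is a seed.

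Now suppose $L\neq\{\emptyset\}$. Corollary~\ref{cor:cosimple} applied to $L$ shows that $M_{\lambda^\R}$ is cosimple for any mod~$2$ characteristic map $\lambda^\R$ over $L$. Its rank is $\dim L+1$ because the facets of $L$ are bases, so its corank is the Picard number of $L$. Applying Lemma~\ref{lemma:IDCM_suspension} $k$ times, each step raising the vertex count by two and the rank by one, hence the corank by one, gives a cosimple binary matroid on the vertex set of $K$ containing $K$. Its corank is $(m-2k)-(n-k)+k = p$. This proves the direction "not a cross-polytope boundary $\Rightarrow$ a cosimple matroid exists".

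For the converse, let $K=\partial$(cross-polytope) on the vertices $\{1,1',\dots,n,n'\}$, and let $M\supseteq K$ be any binary matroid of rank $n$. I will show that each antipodal pair $\{j,j'\}$ is a cocircuit of $M$, so $M$ has series elements and is not cosimple. (When $n=1$, $\{1,1'\}$ is already a cocircuit, since both elements are parallel and nonloops in a rank-$1$ matroid on two elements.)

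Since $\{j,j'\}$ is not a loop-pair, being a cocircuit is equivalent to the remaining $2n-2$ elements spanning a hyperplane. Fix a transversal $S$ of the other $n-1$ pairs; it is a ridge of $K$. For each $k\neq j$ I need $\lambda(k')\in \operatorname{span}(S)$ whenever $k\in S$. The tool is links of ridges. For a ridge $\tau$, the link $K/\tau$ is an antipodal pair $\{a,a'\}$. Since $M/\tau$ has rank $1$ and $a,a'$ are nonloops in it, they are parallel there, so
$$
\lambda(a)+\lambda(a')\in\operatorname{span}\lambda(\tau).
$$
Applying this with $\tau=(S\setminus\{k\})\cup\{j\}$ and with $\tau=(S\setminus\{k\})\cup\{j'\}$ places $\lambda(k)+\lambda(k')$ in both $\operatorname{span}(S\setminus k, j)$ and $\operatorname{span}(S\setminus k, j')$. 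Applying it with $\tau=S$ gives $\lambda(j)\equiv\lambda(j')\not\equiv0 \pmod{\operatorname{span} S}$.

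The main obstacle is turning these three memberships into $\lambda(k)+\lambda(k')\in\operatorname{span}(S)$. The two hyperplanes $\operatorname{span}(S\setminus k,j)$ and $\operatorname{span}(S\setminus k,j')$ might coincide, so their intersection need not be small enough. I would try an induction on $n$, contracting a vertex $v\in S\setminus\{k\}$ and using Proposition~\ref{proposition:contraction_matroids}-style reasoning. This works because $K/v$ is again a cross-polytope boundary and $M/v\supseteq K/v$. The base case $n=2$ (the $4$-cycle) can be checked by hand in $\Z_2^2$: there every vertex has a forced value, and $\lambda(j)=\lambda(j')$ holds directly. If the induction stalls, the fallback is to show directly that $\lambda(a)=\lambda(a')$ for every antipodal pair, via a rank count over all $2^n$ facets. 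With that equality, $\{j,j'\}$ is obviously a cocircuit.
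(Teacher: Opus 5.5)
Your ``if'' direction is the paper's argument: Corollary~\ref{cor:cosimple} on the nonsuspended core, then Lemma~\ref{lemma:IDCM_suspension} iterated. Your extra checks, that the core $L$ is again a mod~$2$ toric-colorable seed and that the corank comes out to $p$, are correct. The paper does not prove the ``only if'' direction; it cites it as well known.

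Your self-contained attempt at the ``only if'' direction has a genuine gap: the claim that \emph{every} antipodal pair $\{j,j'\}$ is a cocircuit is false. Take $n=2$ with $\lambda(1)=\lambda(1')=e_1$, $\lambda(2)=e_2$, $\lambda(2')=e_1+e_2$. All four facets $\{1,2\},\{1,2'\},\{1',2\},\{1',2'\}$ are bases. Yet $\lambda(2),\lambda(2')$ span $\Z_2^2$, so $\{1,1'\}$ is not a cocircuit.

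The same example refutes your base case and your fallback:
\begin{itemize}
\item The values on the $4$-cycle are not forced, and $\lambda(2)\neq\lambda(2')$.
\item The equality $\lambda(a)=\lambda(a')$ need not hold for all pairs. Here the one pair with equal values is exactly the pair that fails to be a cocircuit.
\end{itemize}
The obstacle you flagged is real, not technical. With $j=1$, $k=2$, $S=\{2\}$, both hyperplanes equal $\langle e_1\rangle$ and contain $\lambda(2)+\lambda(2')=e_1$. But $e_1\notin\operatorname{span}(\lambda(2))$, so no induction can produce the membership you need.

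What is true, and enough, is that \emph{some} antipodal pair is a cocircuit. Normalize $\lambda(j)=e_j$ and $\lambda(j')=Ae_j$ for a binary $n\times n$ matrix $A$. The transversal $\{j : j\notin S\}\cup\{j' : j\in S\}$ is a basis iff $\det A_{S,S}=1$, so all principal minors of $A$ equal $1$.

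Let $D$ be the digraph on $[n]$ with an arc $i\to k$ iff $i\neq k$ and $A_{ik}=1$. A shortest directed cycle of $D$ is induced, since any extra arc among its vertices would give a shorter directed cycle. So its principal submatrix is $\bI+P$ with $P$ a cyclic permutation matrix. This matrix kills the all-ones vector mod~$2$, contradicting that the minor is $1$.

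Hence $D$ is acyclic and has a sink $j$, i.e.\ row $j$ of $A$ is $e_j^\top$. Then every vector other than $\lambda(j),\lambda(j')$ lies in $\{x_j=0\}$, while $\lambda(j)$ and $\lambda(j')$ have $j$-th coordinate $1$. So $\{j,j'\}$ is a cocircuit; equivalently, columns $j$ and $j'$ of the Gale dual $[A^\top\mid\bI_n]$ coincide. Thus $M$ is not cosimple. In the example above the sink is $2$, and the series pair is $\{2,2'\}$, not $\{1,1'\}$.
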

\begin{proof}
    The ``only if" direction is well-known.
    See, for example, \cite{Choi2008}.
    For the ``if" part, by Corollary~\ref{cor:cosimple}, every characteristic map over a nonsuspended seed is cosimple. 
    Hence, by Lemma~\ref{lemma:IDCM_suspension}, all iterated suspensions of a seed which is not the boundary of a cross-polytope support cosimple characteristic maps.
\end{proof}

\section{A Gray code technique for enumerating weak pseudomanifolds}

\subsection{Recollection}
We first recall several properties of PL~spheres used in~\cite{Choi-Jang-Vallee2025} for the enumeration of toric-colorable seeds of Picard number~$4$.
The first property we use is that PL~spheres are \emph{weak pseudomanifolds}, which are pure simplicial complexes having each ridge contained in exactly two facets.
In turn, enumerating weak pseudomanifolds is done by linear algebra as follows.

Let $M$ be a pure simplicial complex, $\{\sigma_1,\ldots,\sigma_F\}$ be its set of facets, and $\{\tau_1,\ldots,\tau_R\}$ be its set of ridges. 
The \emph{ridge-facet incidence matrix} of $M$ is the matrix $A(M)\in{\{0,1\}}^{R\times F}$ given by   
$$
    {A(M)}_{rj} =
        \begin{cases}
            1 & \text{if }\tau_r\subseteq \sigma_j,\\
            0 & \text{if }\tau_r\not\subseteq \sigma_j.
        \end{cases}
$$

A pair $(M,K)$ of pure simplicial complexes on the same finite set is called a \emph{pure relative pair} if $M$ and $K$ have the same dimension and $K\subseteq M$.

\begin{example}
    Let $K$ be a PL~sphere and let $\lambda^\R$ be a mod~$2$ characteristic map over $K$. 
    Then, $K$ is a weak pseudomanifold, and $(M_{\lambda^\R},K)$ is a pure relative pair.
\end{example}

Let $(M,K)$ be a pure relative pair and let $\{\sigma_1,\ldots,\sigma_F\}$ be the set of facets of~$M$.
The \emph{facet indicator vector of $K$ in $M$} is denoted by $\chi^K\in{\{0,1\}}^F$ and is given by
$$
    {(\chi^K)}_j = 
        \begin{cases}
            1 & \text{if }\sigma_j\text{ is a facet of }K,\\
            0 & \text{if }\sigma_j\text{ is not a facet of }K.
        \end{cases}
$$

We first recall several properties of PL~spheres used in~\cite{Choi-Jang-Vallee2024} for the enumeration of toric-colorable seeds of Picard number~$4$.
\begin{proposition}[\cite{Choi-Jang-Vallee2024}]\label{proposition:mod_2_kernel}
    Let $(M,K)$ be a pure relative pair.
    Then $K$ is a weak pseudomanifold if and only if the integer vector $A(M)\chi^K$ has only coordinates in~$\{0,2\}$.
    In particular, $\chi^K\in\ker_2 A(M)$.
\end{proposition}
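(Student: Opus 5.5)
The plan is to compute the vector $A(M)\chi^K$ coordinate by coordinate and read off its entries combinatorially. For a ridge $\tau_r$ of $M$, the definitions of $A(M)$ and $\chi^K$ give
$$
    {(A(M)\chi^K)}_r=\sum_{j=1}^F {A(M)}_{rj}{(\chi^K)}_j=\#\{j \mid \tau_r\subseteq\sigma_j,\ \sigma_j \text{ is a facet of } K\}.
$$
So the $r$-th entry counts the facets of $K$ containing $\tau_r$. Two facts about pure relative pairs make this count easy to use. First, $K\subseteq M$ and $K$, $M$ are pure of the same dimension, so every facet of $K$ is a facet of $M$. Second, every ridge of $K$ is contained in a facet of $K$, hence in a facet of $M$, so it is a ridge of $M$. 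Consequently the ridges of $K$ are exactly those ridges $\tau_r$ of $M$ for which the count is nonzero, and summing over all facets of $M$ loses nothing.

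For the forward direction, assume $K$ is a weak pseudomanifold and fix a ridge $\tau_r$ of $M$. If $\tau_r$ lies in some facet of $K$, then it is a ridge of $K$, so it lies in exactly two facets of $K$ and the entry equals $2$. Otherwise the entry is $0$.

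For the converse, assume every entry lies in $\{0,2\}$. The complex $K$ is pure by hypothesis, so it remains to check the ridge condition. Let $\tau$ be a ridge of $K$. By the second fact above, $\tau=\tau_r$ for some ridge of $M$. Since $\tau$ lies in at least one facet of $K$, the $r$-th entry is nonzero, hence equal to $2$. Thus $\tau$ lies in exactly two facets of $K$, and $K$ is a weak pseudomanifold.

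The final claim follows by reducing modulo $2$: every entry is even, so $A(M)\chi^K\equiv 0 \pmod 2$, that is, $\chi^K\in\ker_2 A(M)$.

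I expect no real obstacle here. The only point requiring care is the bookkeeping between ridges of $K$ and ridges of $M$, which is exactly where the equal-dimension assumption in the definition of a pure relative pair is used.
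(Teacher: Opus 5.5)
Your proof is correct: the $r$-th entry of $A(M)\chi^K$ counts the facets of $K$ containing $\tau_r$, and since $K$ and $M$ have the same dimension, facets and ridges of $K$ are facets and ridges of $M$; together with the purity of $K$, this gives both directions and the mod~$2$ consequence. The paper does not reprove this statement but quotes it from earlier work of Choi, Jang, and Vall\'ee, and your direct counting argument is the natural proof of it.
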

Proposition~\ref{proposition:mod_2_kernel} gives a direct algorithm for computing all weak pseudomanifold subcomplexes of a given pure simplicial complex $M$ with $F$ facets.
In our applications, $M$ will be a cosimple binary matroid.
The algorithm proceeds as follows.
\begin{enumerate}
    \item Compute a mod~$2$ basis $B = \begin{bmatrix} b_1& \cdots & b_d \end{bmatrix}$ of the kernel of the ridge-facet incidence matrix $A(M)$.
    \item For every vector $X\in \Z_2^d$, compute $\chi = BX$ in $\Z_2^F$. 
    If the integer vector $A(M)\chi$ has only entries in $\{0,2\}$, store $\chi$.
\end{enumerate}

In order to improve the total computation time of the enumeration of all $\chi^K=BX$, we will use a method called Gray code enumeration for $X$ as follows.
For $d\geq 1$, a \emph{Gray code enumeration} of $\{0,1\}^d$ is an ordering $X^0,X^1,\ldots,X^{2^d-1}$ of the elements of $\{0,1\}^d$ such that $X^{\kappa-1}$ and $X^\kappa$ differ in exactly one coordinate for every $1\le \kappa\le 2^d-1$.

\begin{example}
    The most commonly used Gray code enumeration is the \emph{binary-reflected Gray code}, defined by
    $$
        X^\kappa = \kappa \oplus (\kappa \gg 1), \qquad 0 \leq \kappa \leq 2^d-1,
    $$
    where $\oplus$ denotes the bitwise XOR operation, and $\gg$ represents a bit shift.
    In this encoding, the coordinate that changes between $X^{\kappa-1}$ and $X^\kappa$ is the active bit of $X^{\kappa-1}\oplus X^\kappa$.
\end{example}

We use a function $\texttt{Gray}$ associated with a fixed Gray code enumeration of $\{0,1\}^d$.
For $1\le \kappa\le 2^d-1$, the value $\texttt{Gray}[\kappa]$ is the coordinate that changes between $X^{\kappa-1}$ and $X^\kappa$.

We prepare a Gray code enumeration of weak pseudomanifolds in the mod~$2$ kernel of a matrix.
\begin{algorithm}
    \caption{Gray code enumeration of weak pseudomanifolds in the mod~$2$ kernel of a matrix.\label{algorithm:Gray_code}}
    \KwData{%
            - A matrix $A\in{\{0,1\}}^{R\times F}$ in sparse column form: $A_{\scol}[j]$ is the list of row indices $r$ with $A_{r j}=1$,\newline
            - A matrix $B\in{\{0,1\}}^{F\times d}$ whose columns $B[1],\ldots,B[d]$ form a basis of the mod~$2$ kernel of~$A$.}
    \KwResult{The set $\texttt{wkPsdMfd}$ of all nonzero $y\in\ker_2(A)$ such that the integer vector $Ay\in{\{0,2\}}^R$.}

    $y \gets \mathbf{0}\in{\{0,1\}}^F$\;
    $\rho \gets \mathbf{0}\in \Z^R$\;
    $\texttt{wkPsdMfd} \gets \emptyset$\;

    \For{$\kappa \gets 1$ \KwTo$2^d - 1$}{
        $\beta \gets \texttt{Gray}[\kappa]$
        \tcp*{index of the bit flipped at step~$\kappa$}
        $y \gets y \oplus B[\beta]$\;
        \For{$j\in\supp(B[\beta])$}{
            \lFor{$r\in A_{\scol}[j]$}{$\rho[r] \gets \rho[r] + (2\,y[j]-1)$}
        }
        \lIf{$\rho\in{\{0,2\}}^R$}{$\texttt{wkPsdMfd} \gets \texttt{wkPsdMfd}\cup\{y\}$}
    }
    \Return$\texttt{wkPsdMfd}$\;
\end{algorithm}

\begin{proposition}\label{prop:Gray_code_correctness}
    Algorithm~\ref{algorithm:Gray_code} returns exactly the nonzero vectors $y\in\ker_2(A)$ such that the integer vector $Ay$ belongs to $\{0,2\}^R$.
\end{proposition}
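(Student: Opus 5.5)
The plan is to prove correctness by a loop invariant on the pair $(y,\rho)$. The invariant states that after iteration $\kappa$ (and before the first iteration, for $\kappa=0$), $y=BX^\kappa$ over $\Z_2$ and $\rho=Ay$ as an integer vector in $\Z^R$. Here $X^0=\mathbf 0$ and $X^1,\dots,X^{2^d-1}$ are the successive elements of the fixed Gray code. Establishing this invariant is the main step; the conclusion then follows by a counting and bijectivity argument.

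For the base case, $y=\mathbf 0=BX^0$ and $\rho=\mathbf 0=A\mathbf 0$. For the inductive step, assume the invariant after step $\kappa-1$. Since $X^\kappa=X^{\kappa-1}\oplus e_\beta$ with $\beta=\texttt{Gray}[\kappa]$, the update $y\gets y\oplus B[\beta]$ gives $y=BX^\kappa$ by linearity over $\Z_2$. For $\rho$ we need care, because $\rho$ lives over $\Z$ while $y$ is updated mod~$2$.

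The key observation is that $y$ changes exactly in the coordinates $j\in\supp(B[\beta])$. Each such coordinate flips from $1-y[j]$ to the new value $y[j]$, and this changes the integer $y[j]$ by exactly $2y[j]-1$, that is $+1$ if it became $1$ and $-1$ if it became $0$. Coordinates outside the support are unchanged. Hence the new integer vector $Ay$ equals the old one plus $\sum_{j\in\supp(B[\beta])}(2y[j]-1)A_{\bullet j}$. Adding the $j$-th column $A_{\bullet j}$ means adding the increment to $\rho[r]$ for each $r\in A_{\scol}[j]$, which is exactly what the inner loops do. Note that the inner loops read $y[j]$ after the XOR update, which is precisely what makes the sign $2y[j]-1$ correct. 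This bookkeeping is the only delicate point, and it is where I expect the main obstacle to lie.

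With the invariant in hand, the conclusion is a bijectivity argument. Because the columns of $B$ form a basis of $\ker_2(A)$, the map $X\mapsto BX$ is a bijection from $\Z_2^d$ onto $\ker_2(A)$. Because the Gray code lists every element of $\{0,1\}^d$ exactly once, starting at $X^0=\mathbf 0$, the iterations $\kappa=1,\dots,2^d-1$ visit each nonzero $y\in\ker_2(A)$ exactly once, and never visit $y=\mathbf 0$. At each such visit, $y$ is added to $\texttt{wkPsdMfd}$ iff $\rho=Ay\in\{0,2\}^R$. Therefore the returned set is exactly the set of nonzero $y\in\ker_2(A)$ with $Ay\in\{0,2\}^R$.
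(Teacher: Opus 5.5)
Your proof is correct and follows the paper's argument. Both maintain the invariant $\rho=Ay$ over $\Z$ through the $\pm1$ updates on $\supp(B[\beta])$, and both use that $X\mapsto BX$ is a bijection onto $\ker_2(A)$. Your normalization $X^0=\mathbf 0$ is the same implicit assumption the paper makes, and it is harmless, since otherwise one tracks $X^\kappa\oplus X^0$ instead.
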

\begin{proof}
    The Gray code enumeration visits every nonzero vector $X\in\{0,1\}^d$ exactly once. Since the columns of $B$ form a basis of $\ker_2(A)$, the vectors $y=BX$ are precisely the nonzero elements of $\ker_2(A)$.

    We maintain the invariant that $\rho=Ay$ over $\Z$. 
    Initially, $y=0$ and $\rho=0$. 
    At a Gray step, only the coordinates $j\in\supp(B[\beta])$ are toggled. 
    If the new value of~$y[j]$ is $1$, the contribution of column $j$ to each incident row increases by $1$.
    If the new value is $0$, it decreases by $1$. 
    Thus the update
    $$
        \rho[r]\gets \rho[r]+(2y[j]-1)
    $$
    preserves the invariant $\rho=Ay$. 
    Therefore the test $\rho\in\{0,2\}^R$ is exactly the weak pseudomanifold test.
\end{proof}
This reduces the total complexity of the enumeration process as follows.
\begin{proposition}
    Let $M$ be a pure simplicial complex of dimension $n-1$, and suppose that the dimension of the mod~$2$ kernel of $A(M)$ is $d$.
    Then each column of $A(M)$ has exactly $n$ nonzero entries.
    A classical enumeration uses $\cO(F(d+n)2^d)$ elementary operations, while the Gray code enumeration uses $\cO(Fn2^d)$ atomic bitwise operations.
\end{proposition}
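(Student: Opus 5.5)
The statement has three parts, and I will take them in order: the column count, the cost of the classical method, and the cost of the Gray code method.

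\emph{Column count.} Since $M$ is pure of dimension $n-1$, each facet $\sigma_j$ has exactly $n$ vertices. Its ridges, that is, its $(n-2)$-dimensional faces, are obtained by deleting one vertex. So $\sigma_j$ contains exactly $n$ ridges, all distinct and all ridges of $M$. Hence column $j$ of $A(M)$ has exactly $n$ nonzero entries. The same count bounds the sparse list $A_{\scol}[j]$ by $n$, and gives $\sum_j |A_{\scol}[j]| = Fn$ nonzero entries in $A(M)$.

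\emph{Classical enumeration.} For each of the $2^d$ vectors $X\in\Z_2^d$, the classical algorithm does two things.
\begin{enumerate}
    \item It forms $\chi=BX$. Each of the $F$ coordinates is a sum of at most $d$ terms, so this costs $\cO(Fd)$.
    \item It computes the integer vector $A(M)\chi$ by sparse multiplication. Each coordinate $j$ with $\chi_j=1$ contributes to at most $n$ rows, so this costs $\cO(Fn)$.
\end{enumerate}
The test $A(M)\chi\in\{0,2\}^R$ reads each of the $R$ rows once. Every ridge lies in some facet and each facet has $n$ ridges, so $R\le Fn$ and the test is absorbed into the $\cO(Fn)$ term. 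Summing gives $\cO(F(d+n))$ per vector, hence $\cO(F(d+n)2^d)$ in total.

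\emph{Gray code enumeration.} I analyse Algorithm~\ref{algorithm:Gray_code}, whose correctness is Proposition~\ref{prop:Gray_code_correctness}. At step $\kappa$ it does the following.
\begin{enumerate}
    \item It flips one basis vector, $y\gets y\oplus B[\beta]$. This costs $\cO(F)$ bitwise operations.
    \item It updates $\rho$ along $\supp(B[\beta])\subseteq[F]$. For each such $j$ it touches $|A_{\scol}[j]|=n$ rows, so the update costs at most $Fn$ increments or decrements.
\end{enumerate}
Over $2^d-1$ steps this gives $\cO(Fn2^d)$ operations, with no factor of $d$, because $X\mapsto BX$ is never recomputed.

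The main obstacle is the membership test $\rho\in\{0,2\}^R$, which naively costs $R$ per step. I would handle it by keeping a counter of the number of rows with $\rho[r]\notin\{0,2\}$. Each update to $\rho[r]$ changes this counter by at most one, in $\cO(1)$ time, so the test itself is $\cO(1)$. Alternatively, the bound $R\le Fn$ already absorbs a full scan into the $\cO(Fn)$ per-step budget. Either way the stated totals follow.
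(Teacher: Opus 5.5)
Your proof is correct and uses the same per-step accounting as the paper: $\cO(Fd)+\cO(Fn)$ per vector for the classical method and $\cO(F)+\cO(Fn)$ per Gray step, with the $\{0,2\}^R$ test absorbed via $R\le nF$. You also prove the column count and the bound $R\le nF$, which the paper uses without proof. Your bad-row counter is a valid refinement but is not needed for the stated bound.
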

\begin{proof}
    For $0\leq \kappa \leq 2^d-1$, consider the current vector ${y_\kappa}=BX_\kappa$.
    
    \textit{Classical enumeration.} Computing ${y_{\kappa+1}} = BX_{\kappa+1}$ from scratch requires $\cO(F \cdot d)$ bitwise operations. 
    The integer image $A{y_{\kappa+1}}$ is then computed from scratch by summing over all $j\in\supp({y_{\kappa+1}})$.
    Since each column $A_{\scol}[j]$ has exactly $n$ nonzero entries, this costs $\cO(F \cdot n)$ operations.
    Since we check whether $\rho \in \{0,2\}^R$ at each step, the total per-step cost is thus $\cO(F(d+n)+R)$, giving $\cO((F(d+n)+R)\cdot 2^d)$ overall.
    Since $R\le nF$, the cost is $\cO(F(d+n)2^d)$.

    \textit{Gray code enumeration.} For $1\leq \kappa < 2^d$, $X_\kappa$ and $X_{\kappa+1}$ differ only at a single index $\beta\in[d]$.
    Thus, we have
    $$
        {y_{\kappa+1}} = {y_\kappa} \oplus B[\beta],
    $$
    which requires $\cO(F)$ bitwise operations. 
    Moreover, only the coordinates $j\in\supp(B[\beta])$ of $\chi^K$ are toggled, so only the row sums $\rho[r]$ for rows $r\in A_{\scol}[j]$ need updating.
    Since each column $A_{\scol}[j]$ has exactly $n$ nonzero entries and $|\supp(B[\beta])|\leq F$, updating the row sums costs at most $\cO(F\cdot n +R) =  \cO(Fn)$ operations per step. 
    The total cost is therefore $\cO(F n\cdot 2^d)$.
\end{proof}

In the Picard number $4$ computations, the values of $n$ range from $2$ to $11$, while the values of $d$ range from $10$ to $30$.
Thus the Gray code method removes the repeated matrix-vector multiplication by $B$.
In our implementation, for the Picard number four instances considered in \cite{Choi-Jang-Vallee2024}, this gives about a fourfold speedup.

\begin{remark}
    The method also parallelizes naturally.
    Fix the first $w$ coordinates and choose a head vector $X^{\head}\in\{0,1\}^w$.
    Assign one head vector to each of the $2^w$ workers.
    Each worker initializes $y$ and $\rho$ from this head vector and then runs a Gray code enumeration on the remaining cube $\{0,1\}^{d-w}$.
\end{remark}

It remains to adapt this enumeration algorithm to the dynamic programming method.
For this purpose, we introduce a few more definitions.
A pure simplicial complex is a \emph{weak pseudomanifold with boundary} if each ridge is contained in at most $2$ facets.

\begin{example}
    PL~balls are weak pseudomanifolds with boundary.
\end{example}

\subsection{Gray code enumeration for weak pseudomanifolds with boundary}
We now describe the extension step used in our dynamic programming scheme.
It is adapted from the final enumeration step of~\cite{Choi-Jang-Vallee2024}.

Given a pure relative pair $(M,L)$ with $L$ a weak pseudomanifold with boundary, the algorithm outputs all pure relative pairs $(M,K)$ such that $L\subseteq K$ and $K$ is a weak pseudomanifold.
Let $\chi^L\in\{0,1\}^F$ denote the facet indicator vector of the facets of $L$.
We say that a ridge at index $r$ is \emph{incomplete} in $L$ if $(A(M)\chi^L)_r=1$.
It is called \emph{full} if $(A(M)\chi^L)_r=2$.
Here $A(M)\chi^L$ is computed over $\Z$.

We now describe all steps of Algorithm~\ref{algorithm:Gray_code_fixed_facets}.

\noindent\textbf{Phase 1. Unit propagation.}
We iteratively derive two sets of constraints on the facets of $M$ that are not facets of $L$.
We maintain a vector $\chi^+\in\{0,1\}^F$ of \emph{mandatory} facets (initialized to $\chi^L$) and a vector $\chi^-\in\{0,1\}^F$ of \emph{forbidden} facets (initialized to $\mathbf{0}$), and repeat the following until no change occurs.
For each ridge $r$, let $s = |\{j\in A(M)[r] \mid \chi^+[j]=1\}|$ be the number of mandatory facets already covering $r$, and let $\cF_r = \{j\in A(M)[r] \mid \chi^+[j]=0, \chi^-[j]=0\}$ be the set of remaining free facets incident to $r$.
\begin{itemize}
    \item If $s > 2$, the current partial solution is infeasible and we return $\emptyset$.
    \item If $s = 2$, every free facet incident to $r$ must be forbidden: set $\chi^-[j]\gets 1$ for all $j\in \cF_r$.
    \item If $s=1$ and $|\cF_r|=0$, the current partial solution is infeasible and we return $\emptyset$.
    \item If $s = 1$ and $|\cF_r| = 1$, the unique free facet is forced: set $\chi^+[\cF_r[1]]\gets 1$.
    \item If $s = 0$ and $|\cF_r| = 1$, adding $\cF_r[1]$ would leave $r$ incomplete with no further option; set $\chi^-[\cF_r[1]]\gets 1$.
\end{itemize}

\noindent\textbf{Phase 2. Constrained column echelon form.}
Let $B\in\{0,1\}^{F\times d}$ be a matrix whose columns form a mod~$2$ basis of $\ker_2(A(M))$.
We compute a reduced column echelon form $\hat B[1],\ldots,\hat B[d]$ of $B$ over $\Z_2$ with pivots $p_1,\ldots,p_d$.
The pivot search first uses rows in $\supp(\chi^+)$, then rows in $\supp(\chi^-)$, and finally the remaining rows.
This yields a partition of the column indices into 
\begin{align*}
    \cI^+ &= \{i\in[d]\mid p_i\in\supp(\chi^+)\}, \\
    \cI^- &= \{i\in[d]\mid p_i\in\supp(\chi^-)\}, \text{ and }\\
    \cI^\circ  & = [d]\setminus(\cI^+\cup \cI^-).  
\end{align*}

Because all constrained rows are processed before unconstrained rows, any constrained row that is not selected as a pivot has zero entries in all remaining free generators. 
Hence its value is already determined once the coefficients corresponding to constrained pivot rows are fixed.

\noindent\textbf{Phase 3. Initialization from forced basis vectors.}
Since $\hat B$ restricted to the pivot rows is the identity matrix, the generator $\hat B[i]$ is the unique way of including the mandatory facet $p_i$ for $i\in \cI^+$, and must be excluded to avoid the forbidden facet $p_i$ for $i\in \cI^-$.
We therefore initialize
$$
    y \gets \bigoplus_{i\in \cI^+} \hat B[i],
$$
and discard the generators indexed by $\cI^-$ entirely, reducing the search space by a factor of $2^{|\cI^+|+|\cI^-|}$.
We then verify that $y$ is consistent: if any mandatory facet is absent from $\supp(y)$, or any forbidden facet belongs to $\supp(y)$, we return $\emptyset$.
Indeed, for every $y'$ obtained in the Gray code enumeration, the free generators indexed by $\cI^\circ$ have zero entries on all constrained rows,
that is, on $\supp(\chi^+) \cup \supp(\chi^-)$.
Hence
$$
    y'|_{\supp(\chi^+) \cup \supp(\chi^-)} = y|_{\supp(\chi^+) \cup \supp(\chi^-)}.
$$
Consequently $y'$ satisfies two conditions 
$$
    \supp(\chi^+) \subset \supp(y') \quad \text{ and } \quad \supp(y') \cap \supp(\chi^-) = \varnothing
$$
if and only if $y$ satisfies the same two conditions.

\noindent\textbf{Phase 4. Gray code enumeration.}
It remains to enumerate all extensions of $y$ by subsets of the free generators $\{\hat B[i]\}_{i\in \cI^\circ}$, i.e., all elements of the form
\[
    y \oplus \bigoplus_{i\in \cI^\circ} x_i \hat B[i], \qquad x_i\in\{0,1\},
\]
that additionally satisfy $A(M)y'\in\{0,2\}^R$.
As previously, rather than checking each of the $2^{|\cI^\circ|}$ candidates from scratch, we maintain the row-sum vector $\rho[r] = \sum_{j\in A(M)[r]} y[j]$ incrementally.
All $2^{|\cI^\circ|}$ candidates, including the initial one $y$, are checked. 
After testing $y$, the remaining candidates are traversed in Gray code order, so that consecutive candidates differ by the flip of a single free generator $\hat B[\beta]$.
Each flip updates $\rho$ in time proportional to $n|\supp(\hat B[\beta])| \leq nF$.
A candidate $y'$ is a valid extension if and only if $\rho\in\{0,2\}^R$, and we check this after every complete update of $\rho$.
Thus, recalling that $R\leq nF$, the enumeration phase costs at most $\cO\left(nF\cdot 2^{|\cI^\circ|}\right)$.

\begin{algorithm}
    \caption{$\texttt{GrayCodeExtend}$, Gray code enumeration of weak pseudomanifolds extending a given weak pseudomanifold with boundary.\label{algorithm:Gray_code_fixed_facets}}
    \KwData{%
    - A matrix $A\in{\{0,1\}}^{R\times F}$ in sparse row and sparse column forms: $A_{\srow}[r]$ is the list of column indices $j$ with $A_{r j}=1$, and $A_{\scol}[j]$ is the list of row indices $r$ with $A_{r j}=1$,\newline
            - A vector $\chi^+ =\chi^L\in{\{0,1\}}^F$ of mandatory columns, where $L (\neq \emptyset)$ is the prescribed weak pseudomanifold with boundary,\newline
            - A matrix $B\in{\{0,1\}}^{F\times d}$ whose columns $B[1],\ldots,B[d]$ form a basis of the mod~$2$ kernel of~$A$.}
    \KwResult{The set $\texttt{wkPsdMfd}$ of all $y\in\ker_2(A)$ such that the integer vector $Ay$ belongs to $\{0,2\}^R$, $\supp(\chi^+)\subseteq\supp(y)$, and $\supp(y)\cap\supp(\chi^-)=\emptyset$.}

    \tcc{Phase 1 - Unit propagation}
    $\chi^- \gets \mathbf{0} \in {\{0,1\}}^F$\;
    \Repeat{\textnormal{no change to $\chi^+$ or $\chi^-$}}{
        \For{each row $r \in [R]$}{
            $s \gets \bigl|\{j \in A_{\srow}[r] : \chi^+[j]=1\}\bigr|$\;
            $\cF_r \gets \{j \in A_{\srow}[r] : \chi^+[j]=0,\; \chi^-[j]=0\}$\;
            \lIf{$s > 2$}{\Return$\emptyset$}
            \lIf{$s = 2$}{set $\chi^-[j]\gets 1$ for all $j\in \cF_r$}
            \lIf{$s = 1$ and $|\cF_r|=0$}{\Return$\emptyset$}
            \lIf{$s = 1$ and $|\cF_r|=1$}{set $\chi^+[\cF_r[1]]\gets 1$}
            \lIf{$s = 0$ and $|\cF_r|=1$}{set $\chi^-[\cF_r[1]]\gets 1$}
        }
    }
   \tcc{Phase 2 - Constrained column echelon form}
    Compute a column echelon form $\hat{B}[1],\ldots,\hat{B}[d]$ of $B$ with pivots $p_1,\ldots,p_{d}$,
    prioritising pivots in $\supp(\chi^+)$ first, then pivots in $\supp(\chi^-)$, and placing free columns last\;
    \tcc{Phase 3 - Initialization from forced basis vectors}
    $y \gets \mathbf{0}\in{\{0,1\}}^F$\;
    $\cI^\circ \gets [\,]$ \tcp*{ordered list of free basis indices}
    \For{$i \gets 1$ \KwTo$d$}{
        \lIf{$\chi^+[p_i]=1$}{$y \gets y \oplus \hat{B}[i]$}
        \lElseIf{$\chi^-[p_i]=0$}{append $i$ to $\cI^\circ$}
    }
    \lIf{$\exists j\in[F]\text{ such that }\chi^+[j]=1\text{ and }y[j]=0$}{\Return$\emptyset$}
    \lIf{$\exists j\in[F]\text{ such that }\chi^-[j]=1\text{ and }y[j]=1$}{\Return$\emptyset$}

    \tcc{Phase 4 - Gray code enumeration with incremental row sums} 
    $\texttt{wkPsdMfd}\gets$ output of an adaptation of Algorithm~\ref{algorithm:Gray_code} that first tests the initial vector $y$, and then toggles only the columns $\hat B[i]$ with $i \in \cI^\circ$ in Gray code order.
    
    \Return$\texttt{wkPsdMfd}$\;
\end{algorithm}

\begin{proposition}\label{prop:GrayCodeExtend_correctness}
    Algorithm~\ref{algorithm:Gray_code_fixed_facets} returns exactly the vectors $y\in\ker_2(A)$ such that
    $$
        Ay\in\{0,2\}^R,\qquad \supp(\chi^L)\subseteq\supp(y),
    $$
    and hence exactly the weak pseudomanifold subcomplexes of $M$ extending $L$.
\end{proposition}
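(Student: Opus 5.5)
The plan is to prove the two inclusions between the output set and the target set
$$
    \mathcal{T} := \{\, y\in\ker_2(A) \mid Ay\in\{0,2\}^R,\ \supp(\chi^L)\subseteq\supp(y)\,\}.
$$
The key observation is that the unit propagation of Phase~1 is \emph{sound}: every $y\in\mathcal{T}$ satisfies $\supp(\chi^+)\subseteq\supp(y)$ and $\supp(y)\cap\supp(\chi^-)=\emptyset$ for the final vectors $\chi^\pm$. I will show this by induction on the number of updates. Initially $\chi^+=\chi^L$ and $\chi^-=0$, so the claim holds.

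Fix $y\in\mathcal{T}$ and a ridge $r$, and recall that $(Ay)_r\in\{0,2\}$.
\begin{itemize}
    \item If $s>2$, then $(Ay)_r\ge s>2$, which is impossible, so returning $\emptyset$ loses nothing.
    \item If $s=2$, then $y$ already covers $r$ twice, so every other facet incident to $r$ is absent from $y$; forbidding them is valid.
    \item If $s=1$, then $(Ay)_r=2$, so exactly one non-forbidden, non-mandatory facet at $r$ lies in $y$. If there is none, the infeasibility return is correct; if there is exactly one, forcing it is correct.
    \item If $s=0$ and $|\cF_r|=1$, then $(Ay)_r$ can only come from the single free facet, since forbidden facets are absent from $y$ by induction. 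Thus $(Ay)_r\in\{0,1\}$, so $(Ay)_r=0$ and the facet is absent.
\end{itemize}
Hence $\mathcal{T}$ coincides with the set $\mathcal{T}'$ of $y\in\ker_2(A)$ with $Ay\in\{0,2\}^R$ that respect the final $\chi^+$ and $\chi^-$. Whenever an early return happens, $\mathcal{T}=\emptyset$.

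Next I will use the echelon structure of Phases~2 and~3. Since $\hat B$ is a basis of $\ker_2(A)$ in reduced column echelon form, each $y\in\ker_2(A)$ is written uniquely as $\bigoplus_i x_i\hat B[i]$, with $x_i=y[p_i]$. For $y\in\mathcal{T}'$ this forces $x_i=1$ for $i\in\cI^+$ and $x_i=0$ for $i\in\cI^-$. So $\mathcal{T}'$ is contained in the affine family $y_0\oplus\operatorname{span}\{\hat B[i]\mid i\in\cI^\circ\}$, where $y_0$ is the Phase~3 initialization.

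By the prioritized pivot search, the free generators vanish on all rows of $\supp(\chi^+)\cup\supp(\chi^-)$. This is the point I expect to need the most care. I will argue it as follows: a constrained row that was not chosen as a pivot must already be zero in every column remaining when the search moves on to unconstrained rows. The free columns are exactly those columns, and later reduction steps use only pivots in unconstrained rows, so these zeros are preserved. Consequently every member of the affine family agrees with $y_0$ on the constrained rows. If $y_0$ violates a constraint, then $\mathcal{T}'=\emptyset$ and returning $\emptyset$ is correct. Otherwise every family member respects the constraints, so $\mathcal{T}'$ is exactly the set of family members $y'$ with $Ay'\in\{0,2\}^R$.

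Finally, Phase~4 visits each of the $2^{|\cI^\circ|}$ family members exactly once: first $y_0$, then the others in Gray code order. By the invariant $\rho=Ay$ from Proposition~\ref{prop:Gray_code_correctness}, with $\rho$ initialized to $Ay_0$, the algorithm outputs exactly those $y'$ with $\rho\in\{0,2\}^R$, that is, exactly $\mathcal{T}'=\mathcal{T}$. The geometric reformulation then follows from Proposition~\ref{proposition:mod_2_kernel}. A vector $y$ with $Ay\in\{0,2\}^R$ is the facet indicator of a weak pseudomanifold subcomplex $K$ of $M$. The condition $\supp(\chi^L)\subseteq\supp(y)$ says precisely that $L\subseteq K$, because both complexes are pure of the same dimension and are therefore determined by their facets.
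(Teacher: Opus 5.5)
Your proof is correct and follows essentially the same route as the paper's. Both arguments rest on the same steps: soundness of the Phase~1 unit-propagation rules, the reduced column echelon basis forcing the coefficients indexed by $\cI^+$ and $\cI^-$ while the free generators vanish on all constrained rows, and the Gray code traversal maintaining the invariant $\rho=Ay$. You make explicit several points the paper's proof leaves implicit, namely the rule-by-rule induction for Phase~1, why free generators vanish on non-pivot constrained rows, and why the Phase~3 consistency check is exact.
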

\begin{proof}
    The unit propagation rules in Phase~1 are sound consequences of the condition that every row sum must be either $0$ or $2$. 
    Hence they do not exclude any valid extension of the prescribed complex $L$.
    The column operations used to obtain $\hat B$ only replace the chosen basis of $\ker_2(A)$ by another basis of the same vector space. 
    Since the constrained pivot rows form an identity matrix, the coefficients indexed by $\cI^+$ and $\cI^-$ are forced, while the coefficients indexed by $\cI^\circ$ parametrize all remaining candidates satisfying the forced constraints.
    The Gray code traversal visits all choices of these free coefficients exactly once. 
    Throughout the traversal, the maintained vector $\rho$ is equal to $Ay$ over $\Z$, so the test $\rho\in\{0,2\}^R$ is exactly the weak pseudomanifold condition by Proposition~\ref{proposition:mod_2_kernel}. 
    Therefore Algorithm~\ref{algorithm:Gray_code_fixed_facets} returns precisely the weak pseudomanifold subcomplexes of $M$ extending $L$.
\end{proof}

\section{Enumeration via binary matroids}

\subsection{Cosimple binary matroids and the contraction category}

By Corollary~\ref{cor:non_IDCM_colorable_seeds}, every mod~$2$ toric-colorable seed which is not the boundary of a cross-polytope is a subcomplex of a cosimple binary matroid of the same Picard number. Consequently, the enumeration of mod~$2$ toric-colorable seeds via cosimple binary matroids reduces to the finite collection of cosimple binary matroids of corank~$p$.
Our enumeration algorithm recursively reconstructs simplicial complexes from their vertex links. Since links correspond to contractions of binary matroids, we organize the cosimple binary matroids of corank~$p$ into the following contraction category, which serves as the indexing structure for the dynamic programming algorithm developed in the next subsection.

The \emph{contraction category} of a matroid $M$ is the category $\mathsf{Contr}(M)$ described below.
Its objects are the contraction minors of $M$, taken up to isomorphism.
For two objects $N$ and $N'$, a generating morphism $N\to N'$ is given by a single-element contraction $N'\cong N/v$, together with a chosen relabelling of the ground set of $N'$ to that of $N/v$.
All morphisms are generated by these single-contraction morphisms under composition.

From Proposition~\ref{proposition:contraction_matroids}, we can enumerate all cosimple binary matroids of a given corank~$p$ by starting with the unique one having $2^p -1$ elements in its ground set.
We denote this matroid by $X(\Z_2^p)^*$ as it is the dual matroid to the so-called \emph{mod~$2$ universal complex of rank $p$} in~\cite{Baralic-Vavpetic-Vucic2023}, denoted by $X(\Z_2^p)$.

We encode the contraction category of $X(\Z_2^p)^*$ using the following data.
\begin{itemize}
    \item \texttt{ObjBinMat} is a dictionary of lists of matroids.
        Its list at key $m$ contains one representative of each isomorphism class of contraction minors of $X(\Z_2^p)^*$ with ground set of size $m$.

    \item \texttt{MorBinMat} stores the one-step contraction morphisms.
        For $v\in[m]$, the entry $\texttt{MorBinMat}[m][i][v]$ is a pair $(i',\phi)$ consisting of an index of an element in \texttt{ObjBinMat}[m-1] together with a bijection $[m-1]\to [m]\setminus\{v\}$. 
        More precisely, $\texttt{ObjBinMat}[m-1][i']$ is isomorphic to $\texttt{ObjBinMat}[m][i]/v$, and
        $$
            \phi\colon [m-1]\to [m]\setminus\{v\}
        $$
        is the chosen relabelling giving this isomorphism.
\end{itemize}

Algorithm~\ref{algorithm:generate_contraction_category} constructs the contraction category of $X(\Z_2^p)^*$.

\begin{algorithm}
\KwData{The matroid $X(\Z_2^p)^*$.}
\KwResult{\texttt{ObjBinMat} and \texttt{MorBinMat}.}
\tcc{We first create all the objects} $\texttt{ObjBinMat}[2^p-1]\gets[X(\Z_2^p)^*]$\;
\For{$m=2^p-1,2^p-2,\ldots,p+1$}{
    $\texttt{ObjBinMat}[m-1]\gets[]$\;
    \For{$M\in \texttt{ObjBinMat}[m]$}{
        \For{every vertex $v$ of $M$}{
            \If{$v$ is not a loop of $M$}{
                \If{$M/v$ is not isomorphic to any element of $\texttt{ObjBinMat}[m-1]$}{
                    Append $M/v$ to $\texttt{ObjBinMat}[m-1]$\;
                }
            }
        }
    }
}
\tcc{We now create all the morphisms}
\For{$m=2^p-1,2^p-2,\ldots,p+1$}{
    \For{$i\gets1$ \KwTo$\text{len}(\texttt{ObjBinMat}[m])$}{$M\gets \texttt{ObjBinMat}[m][i]$\;
        \For{every vertex $v$ of $M$}{
            \If{$v$ is not a loop of $M$}{
                $\texttt{MorBinMat}[m][i][v]\gets(i',\phi)$ where $\phi(\texttt{ObjBinMat}[m-1][i'])=M/v$\;
            }
        }
    }
}
\Return$(\texttt{ObjBinMat},\texttt{MorBinMat})$\;
\caption{Construction of the contraction category of $X(\Z_2^p)^*$.\label{algorithm:generate_contraction_category}}
\end{algorithm}

\subsection{Dynamic programming on the contraction category}

The key ingredient of the dynamic programming algorithm is that every weak pseudomanifold can be built up from the link at one of its vertices: this link determines the facets containing that vertex, and the remaining facets are then enumerated by extending this prescribed portion. The following proposition establishes, at the level of binary matroids, the compatibility condition that makes this induction well founded.

\begin{proposition}\label{prop:link_wkpsdmfd}
    Let $M$ be a matroid of rank $n$ and let $K\subseteq M$ be a weak pseudomanifold of dimension $n-1$. 
    If $v$ is a vertex of $K$, the link $K/v$ is a weak pseudomanifold contained in $M/v$.
\end{proposition}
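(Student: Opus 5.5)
The statement has two parts, and I will prove them separately: first that $K/v$ is a weak pseudomanifold, and then that $K/v\subseteq M/v$. Both are unwindings of the definitions of link and contraction. I will regard $K/v$ and $M/v$ as complexes on the ambient vertex set $[m]\setminus\{v\}$, as the paper does.

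\emph{The link is a weak pseudomanifold.} First I establish purity. Let $\tau\in K/v$. Then $\tau\cup\{v\}\in K$, and since $K$ is pure of dimension $n-1$, the face $\tau\cup\{v\}$ lies in some facet $\sigma$ of $K$. Because $v\in\sigma$, the set $\sigma\setminus\{v\}$ is a face of $K/v$ of size $n-1$ containing $\tau$. Hence $K/v$ is pure of dimension $n-2$, and its facets are exactly the sets $\sigma\setminus\{v\}$ for facets $\sigma\ni v$ of $K$. It is nonempty because $v$ is a vertex of $K$.

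Next I check the ridge condition. A ridge $\rho$ of $K/v$ has size $n-2$, and $\rho\cup\{v\}$ is then a ridge of $K$. The facets of $K/v$ containing $\rho$ correspond bijectively, via $\sigma'\mapsto\sigma'\cup\{v\}$, to the facets of $K$ containing $\rho\cup\{v\}$. This is because any facet of $K$ containing $\rho\cup\{v\}$ contains $v$. Since $K$ is a weak pseudomanifold, there are exactly two such facets, so $\rho$ lies in exactly two facets of $K/v$.

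\emph{The inclusion.} Let $\tau\in K/v$. Then $\tau\cup\{v\}\in K\subseteq M$, and $v\notin\tau$. Since $v$ is a vertex of $K$, $\{v\}$ is independent in $M$, so $v$ is not a loop. The independent sets of $M/v$ are those $\tau\subseteq [m]\setminus\{v\}$ with $\tau\cup\{v\}$ independent in $M$ (as in the proof of Proposition~\ref{prop:projection_matroid}). Therefore $\tau\in M/v$.

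Finally, I compare dimensions. Since $v$ is not a loop, $\rank(M/v)=n-1$, which matches the dimension $n-2$ of $K/v$. So $(M/v,K/v)$ is a pure relative pair, and this is what is needed to apply Proposition~\ref{proposition:mod_2_kernel} and Algorithm~\ref{algorithm:Gray_code_fixed_facets} recursively.

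No step here is a genuine obstacle. The only point needing care is the bijection between the facets of the link containing $\rho$ and the facets of $K$ containing $\rho\cup\{v\}$, which rests on the convention that faces of $K/v$ avoid $v$.
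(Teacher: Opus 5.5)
Your proof is correct and follows essentially the same route as the paper: $v$ is not a loop, so $M/v$ is defined; the inclusion $K/v\subseteq M/v$ is immediate from the definition of contraction; and the ridge condition follows by lifting a ridge $\rho$ of $K/v$ to the ridge $\rho\cup\{v\}$ of $K$ and matching the two facets on each side. You also check explicitly that $K/v$ is pure and that $\rank(M/v)=n-1$, which the paper leaves implicit.
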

\begin{proof}
    Since $v$ is a vertex of $K$ and $K\subseteq M$, the element $v$ is not a loop of $M$.
    Thus the matroid contraction $M/v$ is well-defined.
    The inclusion $K/v\subseteq M/v$ follows directly from the definitions. 
    Let $\rho$ be a ridge of $K/v$; then $\rho\cup\{v\}$ is a ridge of $K$, so it is contained in exactly two facets $\sigma_1$ and $\sigma_2$ of $K$, both of which contain $v$. 
    The sets $\sigma_1\setminus\{v\}$ and $\sigma_2\setminus\{v\}$ are then the two facets of $K/v$ containing $\rho$, so $K/v$ is a weak pseudomanifold.
\end{proof}

Conversely, let $K\subseteq M$ be a weak pseudomanifold and let $v$ be one of its vertices.
Knowing the link $K/v\subseteq M/v$ determines all facets of $K$ containing $v$, namely the facets of the cone $K/v\ast v$.
Since $K/v$ is a weak pseudomanifold, the cone $K/v\ast v$ is a weak pseudomanifold with boundary.
Thus the link specifies a prescribed weak pseudomanifold with boundary inside $M$, leaving only the remaining facets to be determined.

Algorithm~\ref{algorithm:Gray_code_fixed_facets} can therefore be applied with $(A(M),\chi^+,B(M))$ to enumerate all weak pseudomanifolds $K\subseteq M$ whose support contains $\supp(\chi^+)$.
The prescribed cone $K/v\ast v$ is precisely the input required by Algorithm~\ref{algorithm:Gray_code_fixed_facets}. If
\[
\chi^+ := \chi^{K/v\ast v},
\]
then $\supp(\chi^+)\subseteq\supp(\chi^K)$.
Hence Algorithm~\ref{algorithm:Gray_code_fixed_facets} enumerates all weak pseudomanifolds $K\subseteq M$ extending the prescribed cone.

We now apply the previous extension procedure recursively over the contraction category. To avoid recomputing the same extensions multiple times, we store, for each binary matroid already processed, the collection of all weak pseudomanifolds it contains, using the following data structure: for each $m$ and each index $i$, let
$$
    \begin{aligned}
        \texttt{WkPsdMfd}[m][i] = \bigl\{ \chi^K \mid{}& K\subseteq \texttt{ObjBinMat}[m][i],\\
            &K\text{ is a weak pseudomanifold}   \bigr\}.
    \end{aligned}
$$

The algorithm processes matroids level by level, from the smallest ground set on $m_{\min}$ up to $m_{\max}$, with $m_{\min} \leq m_{\max} \leq 2^p-1$.
We set $m_{\min} = p+2$, since the lower-rank levels do not contribute to any seed without ghost vertices of Picard number~$p$ in the range considered here.

At the base level $m_{\min}$, the matroids are small enough that Algorithm~\ref{algorithm:Gray_code} is applied directly, with no mandatory facets. 

For each larger matroid $M = \texttt{ObjBinMat}[m][i]$ (with $m$ larger than the base level), for every vertex $v$ of $M$ which is not a loop, $M/v$ is cosimple of corank~$p$ by Proposition~\ref{proposition:contraction_matroids}. 
The morphism $\texttt{MorBinMat}[m][i][v] = (i',\phi)$ provides the index $i'$ such that $\texttt{ObjBinMat}[m-1][i']\cong_\phi M/v$, together with the relabelling $\phi\colon[m-1]\xrightarrow{\sim}[m]\setminus\{v\}$.

For each ${y'}\in\texttt{WkPsdMfd}[m-1][i']$, let $K'$ denote the corresponding weak pseudomanifold. Using the relabelling $\phi$, we view $K'$ as a subcomplex of $M/v$ and form the cone $\phi(K')\ast v$, whose facet indicator vector gives the mandatory vector
$$
    \chi^+ := \chi^{\phi(K')\ast v}\in{\{0,1\}}^F.
$$
Running Algorithm~\ref{algorithm:Gray_code_fixed_facets} on $\chi^+$ for every non-loop vertex~$v$ and every such $K'$, and collecting the results, is precisely Algorithm~\ref{algorithm:dynamic_programming} below, whose correctness is established in Proposition~\ref{prop:dynamic_programming_correctness}.

\begin{algorithm}
    \caption{Dynamic programming enumeration of weak pseudomanifolds of Picard number~$p$.\label{algorithm:dynamic_programming}}
    \KwData{The contraction category $(\texttt{ObjBinMat}, \texttt{MorBinMat})$ of $X(\Z_2^p)^*$ from Algorithm~\ref{algorithm:generate_contraction_category}.}
    \KwResult{For every $m\geq m_{\min}$ and $i$, the set $\texttt{WkPsdMfd}[m][i]$ of facet indicator vectors of all weak pseudomanifolds contained in $\texttt{ObjBinMat}[m][i]$.}

    \For{$i\gets 1$ \KwTo$|\texttt{ObjBinMat}[m_{\min}]|$\hfill\tcp{Base case}}{
        $M \gets \texttt{ObjBinMat}[m_{\min}][i]$\;
        $\texttt{WkPsdMfd}[m_{\min}][i] \gets$ output of Algorithm~\ref{algorithm:Gray_code} on $\bigl(A(M),\, B(M)\bigr)$\;
    }
    \For{$m \gets m_{\min}+1$ \KwTo $m_{\max}$\hfill\tcp{Inductive step}}{
        \For{$i \gets 1$ \KwTo$|\texttt{ObjBinMat}[m]|$}{
            $\texttt{WkPsdMfd}[m][i] \gets \emptyset$\;
            $M \gets \texttt{ObjBinMat}[m][i]$\;
            Let $\{V_1,\ldots,V_k\}$ be the orbits of the non-loop vertices of $M$ under $\operatorname{Aut}(M)$,with representatives $v_1,\ldots,v_k$\;
            \For{$j=1,\ldots,k$}
            {
            $(i',\phi) \gets \texttt{MorBinMat}[m][i][v_j]$
            \tcp*{$\phi\colon[m-1]\xrightarrow{\sim}[m]\setminus\{v_j\}$, $\texttt{ObjBinMat}[m-1][i']\cong_\phi M/v_j$}
            $S_j \gets \emptyset$\;
            \For{${y'} \in \texttt{WkPsdMfd}[m-1][i']$}{
                $\chi^+ \gets \chi^{\phi(K')\ast v_j} \in {\{0,1\}}^F$
                \tcp*{mandatory facets}
                $S_j \gets S_j \cup \texttt{GrayCodeExtend}\bigl(A(M),\, \chi^+,\, B(M)\bigr)$\;
            }
            $\texttt{WkPsdMfd}[m][i] \gets \texttt{WkPsdMfd}[m][i] \cup S_j$\;
            \For{$v \in V_j\setminus\{v_j\}$}{
                    Let $\psi\in\operatorname{Aut}(M)$ satisfy $\psi(v_j)=v$\;
                    $\texttt{WkPsdMfd}[m][i] \gets
                        \texttt{WkPsdMfd}[m][i]
                        \cup \{\psi(\chi)\mid \chi\in S_j\}$\;
                }
        }}
    }
    \Return$\texttt{WkPsdMfd}$\;
\end{algorithm}

\begin{proposition}\label{prop:dynamic_programming_correctness}
    Algorithm~\ref{algorithm:dynamic_programming} computes, for every
    $m_{\min}\leq m\leq m_{\max}$ and every index~$i$, the set of facet
    indicator vectors of all nonempty weak pseudomanifolds
    $K\subseteq\texttt{ObjBinMat}[m][i]$.
\end{proposition}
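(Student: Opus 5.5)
We argue by induction on $m$, from $m_{\min}$ up to $m_{\max}$, proving at each level two inclusions: every vector in $\texttt{WkPsdMfd}[m][i]$ is the indicator of a nonempty weak pseudomanifold in $M=\texttt{ObjBinMat}[m][i]$ (soundness), and every such indicator appears (completeness).

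\emph{Base case.} At $m=m_{\min}$ the set is produced by Algorithm~\ref{algorithm:Gray_code} on $(A(M),B(M))$. By Proposition~\ref{prop:Gray_code_correctness}, its output is exactly the set of nonzero $y\in\ker_2 A(M)$ with $A(M)y\in\{0,2\}^R$. By Proposition~\ref{proposition:mod_2_kernel}, these are exactly the indicators of nonempty pure subcomplexes of $M$ of full dimension that are weak pseudomanifolds. The only point to check is the convention that a weak pseudomanifold $K\subseteq M$ has the same dimension as $M$. This holds because indicators are taken with respect to the facets of $M$, so $(M,K)$ is a pure relative pair.

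\emph{Soundness in the inductive step.} Every vector added to $S_j$ comes from $\texttt{GrayCodeExtend}$, so by Proposition~\ref{prop:GrayCodeExtend_correctness} it is a weak pseudomanifold in $M$. It is nonempty because it contains the nonempty cone $\phi(K')\ast v_j$. The remaining vectors are images $\psi(\chi)$ with $\psi\in\operatorname{Aut}(M)$. A matroid automorphism permutes the facets (bases) and ridges of $M$ and preserves incidences. Hence $\psi$ maps weak pseudomanifolds in $M$ to weak pseudomanifolds in $M$, and these vectors are also sound.

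\emph{Completeness in the inductive step.} Let $K\subseteq M$ be a nonempty weak pseudomanifold of dimension $\rank M-1$, and pick a vertex $v$ of $K$. Then $v$ is not a loop, so $v\in V_j$ for some $j$; choose $\psi\in\operatorname{Aut}(M)$ with $\psi(v_j)=v$, taking $\psi=\mathrm{id}$ if $v=v_j$. Set $\tilde K=\psi^{-1}(K)$, a weak pseudomanifold in $M$ having $v_j$ as a vertex. By Proposition~\ref{prop:link_wkpsdmfd}, $\tilde K/v_j$ is a nonempty weak pseudomanifold in $M/v_j$, of dimension one less and of full dimension in $M/v_j$, since $\rank(M/v_j)=\rank M-1$. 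Transporting along $\phi^{-1}$ gives a weak pseudomanifold $K'$ in $\texttt{ObjBinMat}[m-1][i']$. By Proposition~\ref{proposition:contraction_matroids} this object is indeed at level $m-1\ge m_{\min}$, so the induction hypothesis places $\chi^{K'}$ in $\texttt{WkPsdMfd}[m-1][i']$. For this $y'$ we get $\chi^+=\chi^{\tilde K/v_j\ast v_j}$. The facets of $\tilde K$ containing $v_j$ are exactly those of the cone, so $\supp\chi^+\subseteq\supp\chi^{\tilde K}$. Proposition~\ref{prop:GrayCodeExtend_correctness} then yields $\chi^{\tilde K}\in S_j$, and consequently $\chi^K=\psi(\chi^{\tilde K})$ is added.

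\emph{Main obstacle.} The delicate step is justifying that the single fixed automorphism $\psi$ chosen in the algorithm for each $v\in V_j$ suffices. The completeness argument above used an arbitrary $\psi$ with $\psi(v_j)=v$, so we must show the result does not depend on this choice. Let $\psi,\psi'$ both send $v_j$ to $v$. Then $\psi^{-1}\psi'$ lies in the stabilizer of $v_j$. Since $S_j$ is exactly the set of all weak pseudomanifolds in $M$ having $v_j$ as a vertex, it is invariant under that stabilizer. Hence $\psi(S_j)=\psi'(S_j)$, and the choice is harmless. Two further points need care: that $S_j$ really contains every such complex (which is exactly the completeness argument with $\psi=\mathrm{id}$), and the bookkeeping of the relabelling $\phi$ in identifying $K'$ with $\tilde K/v_j$.
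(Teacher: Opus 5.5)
Your proof is correct and follows the paper's argument essentially step for step. It uses induction on $m$, soundness via Proposition~\ref{prop:GrayCodeExtend_correctness} together with automorphism invariance, and completeness by passing to $\widetilde K=\psi^{-1}(K)$, taking the link at the orbit representative $v_j$, invoking the inductive hypothesis, and extending the cone. Your stabilizer argument in the ``main obstacle'' paragraph is valid but unnecessary: the completeness argument works for any $\psi$ with $\psi(v_j)=v$, in particular for the one the algorithm fixes, which is how the paper implicitly handles the point.
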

\begin{proof}
    We proceed by induction on $m$.

    \textit{Base case} ($m=m_{\min}$).
    For each $i$, with $M=\texttt{ObjBinMat}[m_{\min}][i]$, Algorithm~\ref{algorithm:Gray_code}
    applied to $(A(M),B(M))$ returns exactly the facet indicator vectors of all nonempty
    weak pseudomanifolds contained in $M$, by Proposition~\ref{prop:Gray_code_correctness}.

    \textit{Inductive step} ($m>m_{\min}$).
    Suppose the claim holds for $m-1$, and let $M=\texttt{ObjBinMat}[m][i]$, $n=\rank(M)$.

    \textit{Soundness.}
    Every vector added to $\texttt{WkPsdMfd}[m][i]$ arises in one of two ways.
    First, as an output of $\texttt{GrayCodeExtend}(A(M),\chi^+,B(M))$, collected into
    $S_j$, for some orbit representative $v_j$ and some $y'\in\texttt{WkPsdMfd}[m-1][i']$;
    by Proposition~\ref{prop:GrayCodeExtend_correctness} such an output is the facet
    indicator vector of a weak pseudomanifold $K\subseteq M$, nonempty since $\chi^+$
    is itself the nonempty indicator vector of the cone $\phi(K')\ast v_j$ and
    $\supp(\chi^+)\subseteq\supp(\chi^K)$.
    Second, as $\psi(\chi)$ for some $\chi\in S_j$ and $\psi\in\operatorname{Aut}(M)$, in
    the symmetrization step; since $\psi$ sends facets, ridges, and their incidences of
    any subcomplex of $M$ to those of its image, it maps every nonempty weak
    pseudomanifold contained in $M$ to another one, so $\psi(\chi)$ is again the facet
    indicator vector of a nonempty weak pseudomanifold contained in $M$.

    \textit{Completeness.}
    Let $K\subseteq M$ be a nonempty weak pseudomanifold of dimension $n-1$, and let
    $v$ be a non-ghost vertex of $K$ (which exists since $K$ is nonempty); then $v$ is
    not a loop of $M$.
    Let $v_j$ be the representative of the orbit of $v$ under $\operatorname{Aut}(M)$,
    and let $\psi\in\operatorname{Aut}(M)$ satisfy $\psi(v_j)=v$.
    Set $\widetilde K:=\psi^{-1}(K)$; by the argument used in Soundness, $\widetilde K$
    is again a nonempty weak pseudomanifold contained in $M$, and $v_j=\psi^{-1}(v)$ is
    a non-ghost vertex of $\widetilde K$.
    It therefore suffices to show that $\chi^{\widetilde K}$ is added to $S_j$ during the
    processing of representative $v_j$: the symmetrization step of
    Algorithm~\ref{algorithm:dynamic_programming} then adds
    $\psi(\chi^{\widetilde K})=\chi^K$ as well.

    Let $(i',\phi)=\texttt{MorBinMat}[m][i][v_j]$, so
    $\texttt{ObjBinMat}[m-1][i']\cong_\phi M/v_j$.
    By Proposition~\ref{prop:link_wkpsdmfd}, $\widetilde K/v_j$ is a nonempty weak
    pseudomanifold of dimension $n-2$ contained in $M/v_j$ (nonempty since $v_j$ lies
    in a facet of $\widetilde K$), so
    $y':=\chi^{\phi^{-1}(\widetilde K/v_j)}\in\texttt{WkPsdMfd}[m-1][i']$ by the
    inductive hypothesis.

    When the algorithm processes representative $v_j$ and vector $y'$, it forms
    $\chi^+=\chi^{\phi(K')\ast v_j}=\chi^{(\widetilde K/v_j)\ast v_j}$, and
    $\supp(\chi^+)\subseteq\supp(\chi^{\widetilde K})$ as noted following
    Proposition~\ref{prop:link_wkpsdmfd}.
    By Proposition~\ref{prop:GrayCodeExtend_correctness},
    $\texttt{GrayCodeExtend}(A(M),\chi^+,B(M))$ then returns all weak pseudomanifolds
    extending $(\widetilde K/v_j)\ast v_j$, in particular $\chi^{\widetilde K}$, which is
    added to $S_j$.
    By the symmetrization step, $\chi^K=\psi(\chi^{\widetilde K})$ is then also added to
    $\texttt{WkPsdMfd}[m][i]$.

    Since $K$ was arbitrary, the induction is complete.
\end{proof}

\begin{remark}\label{rk:strong_iso}
    The correctness of the orbit reduction in
    Algorithm~\ref{algorithm:dynamic_programming} is established within
    the proof of Proposition~\ref{prop:dynamic_programming_correctness}:
    Soundness covers the vectors produced by the symmetrization step, and
    Completeness shows that for a non-loop vertex $v$ in the orbit $V_j$ of
    representative $v_j$, it suffices to locate $\chi^{\widetilde K}$ in $S_j$ for
    $\widetilde K=\psi^{-1}(K)$, $\psi\in\operatorname{Aut}(M)$ with $\psi(v_j)=v$,
    after which the relabeling step recovers $\chi^K=\psi(\chi^{\widetilde K})$.

    We record in Table~\ref{tab:orbit_savings} the savings achieved by
    this reduction.
    For Picard number~$4$, across all levels $m=7$ to $m=15$,
    the $265$ non-loop vertex--matroid pairs are reduced to $66$
    orbit representatives, giving an overall speedup of $4.02\times$.
    The reduction factor grows monotonically with~$m$, from
    $2.62\times$ at $m=7$ to $15.00\times$ at $m=15$, reflecting
    the fact that cosimple binary matroids of corank~$4$ with fewer
    elements tend to have larger automorphism groups acting more
    transitively on their non-loop vertices.
    In particular, at $m=14$ and $m=15$ every non-loop vertex lies
    in a single orbit, so Algorithm~\ref{algorithm:dynamic_programming}
    performs exactly one call to $\texttt{GrayCodeExtend}$ per link
    at those levels.
    For Picard number~$5$, the orbit reduction is applied at levels
    $m=8$ and $m=9$, where the savings are reported in the lower
    panel of Table~\ref{tab:orbit_savings}.
    At level $m=m_{\max}=10$, the ghost-vertex argument of
    Remark~\ref{rk:m_max} guarantees that a single contraction per
    matroid suffices for completeness, so no orbit reduction is
    needed there.
\end{remark}

\begin{table}[ht]
    \caption{Effect of the orbit reduction at each level~$m$ for 
    the Picard number~$4$ and partial Picard number~$5$ computations.
    For each~$m$, we report the total number of non-loop 
    vertex--matroid pairs $\sum_i |\{\text{non-loop vertices 
    of }\texttt{ObjBinMat}[m][i]\}|$ and the total number of 
    orbit representatives $\sum_i k_i$ actually used by 
    Algorithm~\ref{algorithm:dynamic_programming}, together 
    with the resulting reduction factor.
    At level $m=m_{\max}=10$ for Picard number~$5$, a single 
    contraction per matroid suffices by the ghost-vertex argument 
    of Remark~\ref{rk:m_max}, so the orbit reduction does not apply.
    \label{tab:orbit_savings}}
    \centering
    \small
    \begin{tabular}{cccc}
        \toprule
        $m$ & non-loop vertices & orbit representatives 
            & reduction factor \\
        \midrule
        \multicolumn{4}{c}{\textit{Picard number $4$}} \\
        \midrule
        $7$  & $34$  & $13$ & $2.62\times$ \\
        $8$  & $47$  & $15$ & $3.13\times$ \\
        $9$  & $45$  & $14$ & $3.21\times$ \\
        $10$ & $40$  & $9$  & $4.44\times$ \\
        $11$ & $33$  & $7$  & $4.71\times$ \\
        $12$ & $24$  & $4$  & $6.00\times$ \\
        $13$ & $13$  & $2$  & $6.50\times$ \\
        $14$ & $14$  & $1$  & $14.00\times$ \\
        $15$ & $15$  & $1$  & $15.00\times$ \\
        \midrule
        total & $265$ & $66$ & $4.02\times$ \\
        \midrule
        \multicolumn{4}{c}{\textit{Picard number $5$}} \\
        \midrule
        $8$  & $114$ & $58$ & $1.97\times$\\
        $9$  & $254$ & $126$ & $2.02\times$ \\
        \midrule
        total & $368$ & $184$ & $2\times$ \\
        \bottomrule
    \end{tabular}
\end{table}

\begin{remark}
    In the implementation, $\texttt{WkPsdMfd}[m][i]$ denotes the filtered set of weak pseudomanifolds contained in $\texttt{ObjBinMat}[m][i]$ which pass the Euler-characteristic test. 
    Thus, after this filtering is applied, $\texttt{WkPsdMfd}[m][i]$ is no longer meant to contain all weak pseudomanifold subcomplexes of $\texttt{ObjBinMat}[m][i]$.

    This filtering does not remove any PL~sphere relevant to the final enumeration. 
    Indeed, if $K$ is a PL~sphere and $v$ is a non-ghost vertex of $K$, then $K/v$ is again a PL~sphere and hence has Euler characteristic
    $$
        \chi(K/v)=1+(-1)^{\dim(K/v)}.
    $$
    Therefore every inductive chain obtained by repeatedly taking vertex links of a target PL~sphere survives the Euler-characteristic filter. 
    Consequently, the filter is a pruning device only, and it does not affect the completeness of the final enumeration of mod~$2$ toric-colorable seed PL~spheres.
\end{remark}

\begin{remark}\label{rk:m_max}
    At the level $m=m_{\max}$, the post-processing step retains
    only complexes without ghost vertices. Consequently, it suffices
    to iterate over a single non-loop vertex~$v$ of~$M$ in the inductive
    step of Algorithm~\ref{algorithm:dynamic_programming}: for any
    target complex~$K$ without ghost vertices, every element of the
    ground set~$[m]$ is a vertex of~$K$, so the link $K/v$ exists and
    belongs to $\texttt{WkPsdMfd}[m-1][i']$ regardless of which
    non-loop vertex~$v$ is chosen.
    See the completeness argument in the proof of
    Proposition~\ref{prop:dynamic_programming_correctness}.
\end{remark}

\begin{remark}
    For the Picard-number-five computation reported in Theorem~\ref{thm-main}, we used $m_{\max}=10$, corresponding to $n\leq 5$.

    The remaining cases with $p=5$ and $n\geq 6$ were not enumerated
    due to computational limitations.
    Extending to $n=6$ would require setting $m_{\max}=11$.
    This introduces two additional costs.
    First, the optimization of Remark~\ref{rk:m_max} would no longer
    apply at $m=10$: since complexes with ghost vertices at that level
    must be stored as links of $m=11$ complexes, the ghost-free filter
    that justifies iterating over a single vertex~$v$ would no longer
    hold, and the computation at $m=10$ would instead require one call to
    Algorithm~\ref{algorithm:Gray_code_fixed_facets} per non-loop vertex,
    increasing the running time at that level by up to a factor of~$10$.
    Second, and more significantly, the enumeration at $m=11$ itself
    would need to be carried out: rank-$6$ cosimple binary matroids of
    corank~$5$ on $11$ elements have substantially more bases and a much
    larger kernel dimension than those at $m=10$, and the number of
    weak pseudomanifolds at that level is expected to be orders of
    magnitude larger than the $78{,}933{,}520$ entries in the $n=5$ row
    of Table~\ref{table:numbers_pic5}.
\end{remark}

\subsection{Postprocessing and enumeration of the seeds}\label{sec:postprocessing}

After running Algorithm~\ref{algorithm:dynamic_programming}, we have, for each cosimple binary matroid of corank~$p$, all weak pseudomanifold subcomplexes satisfying the mod~$2$ nonsingularity condition.
Some of these subcomplexes may have ghost vertices.
After deleting ghost vertices, their Picard numbers are at most $p$.

All that remains to do is to find which ones are seeds that are PL~spheres, as well as finding one representative per isomorphism class.
\medskip

First, we need a criterion for a weak pseudomanifold to be a PL~sphere.
When the Picard number is small enough, there is a nice characterization of PL~manifolds that are PL~spheres.
\begin{theorem}[\cite{Bagchi-Datta2005}]\label{theorem:combisp}
	Let $K$ be a PL~manifold such that $m-n\leq 7$.
	If $K$ is a $\Z_2$-homology sphere, then it is a PL~sphere.
\end{theorem}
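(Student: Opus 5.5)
The plan is to separate high dimensions, where vertex counting alone forces sphericity, from a finite set of low-dimensional cases, where the $\Z_2$-homology hypothesis is actually used. Write $d=n-1$ for the dimension of $K$, so the hypothesis reads $m\le d+8$.

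\emph{High dimensions.} I would invoke the theorem of Brehm and K\"uhnel: a combinatorial $d$-manifold with fewer than $3\lceil d/2\rceil+3$ vertices is PL homeomorphic to $S^d$. If $m\le d+8$ and $d\ge 11$, then $m\le d+8<3\lceil d/2\rceil+3$, so $K$ is a PL~sphere with no homological input needed. The cases $d\le 10$ remain.

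\emph{Low dimensions: topological recognition.} For $d\le 2$, every $\Z_2$-homology sphere that is a closed PL manifold is $S^d$, by the classification of surfaces. For $3\le d\le 10$ and $m\le d+8$, the aim is to prove that $K$ is simply connected and an integral homology sphere.
\begin{itemize}
\item \emph{Neighbourliness.} The vertex count is close to the dimension, so I expect $K$ to be $2$-neighbourly, i.e.\ its $1$-skeleton is the complete graph. I would argue via the Alexander-dual/missing-face description: a missing edge $\{u,w\}$ would force the links $K/u$ and $K/w$, which are PL $(d-1)$-spheres because $K$ is a PL manifold, to fit into too few remaining vertices.
\item \emph{Simple connectivity.} With the vertex count this small, I would argue the stronger statement that every triangle is a face, so the $2$-skeleton is complete and $\pi_1(K)=1$. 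If some triangle is missing, a counting argument on the link of one of its vertices should still kill $\pi_1$.
\item \emph{Integral homology.} Simple connectivity gives orientability, hence $H_d(K;\Z)\cong\Z$. By the universal coefficient theorem, $\Z_2$-acyclicity in degrees $1,\dots,d-1$ rules out free summands and $2$-torsion there. Odd torsion would have to be excluded separately, through Poincar\'e duality together with the small number of vertices.
\end{itemize}
Once $K$ is a simply connected integral homology sphere, the generalized Poincar\'e conjecture (Smale for $d\ge5$, Freedman plus Moise-type smoothing arguments for $d=4$, Perelman for $d=3$) gives $K\cong S^d$ topologically. In dimensions $\ne4$ this upgrades to a PL~sphere. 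In dimension $4$ the PL structure is subtle, so there I would instead argue combinatorially.

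\emph{Low dimensions: combinatorial alternative.} I would also try to avoid the big theorems entirely. Pick a vertex $v$ whose antistar $K\setminus v$ is a $\Z_2$-homology ball with at most $d+7$ vertices, and show that this antistar is collapsible, for instance by collapsing away free faces coming from missing faces of small size. Whitehead's theorem then says a collapsible PL manifold is a PL ball. Hence $K=(K\setminus v)\cup(v\ast K/v)$ is a union of two PL balls glued along their common boundary, so it is a PL~sphere by Newman's theorem.

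The main obstacle is the finitely many low-dimensional cases, above all $d=3,4$ with $m\le 11$ or $12$, and the exclusion of odd torsion. There the Brehm--K\"uhnel bound does not apply and neighbourliness may fail. I would handle this as Bagchi and Datta do: a case analysis on the minimal non-faces, combined with the fact that all vertex links are PL spheres of Picard number at most $m-n$, which allows induction on $d$ with the same bound $m-n\le7$.
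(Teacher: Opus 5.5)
The paper gives no proof of this statement; it quotes it from Bagchi--Datta. Measured against their argument, your proposal has a genuine gap.

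\textbf{Where the proposal fails.} Your reduction of $d\ge 11$ to Brehm--K\"uhnel is correct. But the place where the $\Z_2$-homology hypothesis actually matters is the low-dimensional range, and there nothing is proved.

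\emph{Neighbourliness.} This step is false. The boundary of the $(d+1)$-dimensional cross-polytope has $m=2n$, hence $m-n=d+1\le 7$ for $d\le 6$, yet antipodal vertices span no edge. More generally, no suspension is $2$-neighbourly. So the ``complete $2$-skeleton'' route to $\pi_1(K)=1$ fails, and ``a counting argument should still kill $\pi_1$'' is not an argument.

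\emph{Odd torsion.} This is not a side issue but the crux. Bagchi and Datta exhibit a $12$-vertex triangulation of $L(3,1)$. It is a $\Z_2$-homology $3$-sphere with $m-n=8$, so the bound is sharp precisely because of $3$-torsion. ``Poincar\'e duality plus few vertices'' offers no mechanism that separates $7$ from $8$.

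\emph{Dimension $4$.} Even with a homotopy sphere in hand, the PL Poincar\'e conjecture in dimension $4$ is open, and there is no Moise-type result there, as you concede.

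\emph{Combinatorial alternative.} You ask for collapsibility of a vertex antistar, a $d$-dimensional complex on up to $d+7$ vertices. This is at least as strong as the theorem itself, the vertex count grows with $d$, and no collapsing strategy is given.

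\emph{Induction on $d$.} This buys nothing, since the hypothesis already makes every vertex link a PL sphere; the difficulty is global.

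\textbf{The missing idea.} Delete a facet rather than a vertex; this leaves a vertex count bounded independently of $d$.

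Let $\sigma$ be a facet and $B=[m]\setminus\sigma$, so $|B|=m-n\le 7$. The complement $|K|\setminus|\sigma|$ deformation retracts onto the induced subcomplex $K[B]$. Alexander duality in the $\Z_2$-homology sphere $|K|$ then shows that $K[B]$ is $\Z_2$-acyclic.

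The combinatorial core of Bagchi--Datta is that every $\Z_2$-acyclic simplicial complex on at most $7$ vertices is collapsible. This is sharp, because there are non-collapsible $8$-vertex dunce hats.

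Hence the two complementary induced subcomplexes $\sigma$ and $K[B]$ are both collapsible. Since $K$ is a combinatorial manifold, their derived neighbourhoods are PL $d$-balls by Whitehead's regular neighbourhood theorem. These balls cover $|K|$ and meet along their common boundary, so $K$ is a PL sphere.

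This argument is uniform in $d$ and needs neither Brehm--K\"uhnel nor any form of the Poincar\'e conjecture. The threshold $m-n\le 7$ enters only through the $7$-vertex collapsibility lemma.
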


Recall that a pure simplicial complex is a closed PL~manifold if and only if every vertex link is a PL~sphere.
Once $K$ has been verified to be a $\Z_2$-homology sphere, it remains to certify that $K$ is a closed PL~manifold.
We do this by recursively certifying that every vertex link is a PL~sphere, using the previously constructed seed database.
After this PL~manifold property is certified, Theorem~\ref{theorem:combisp} implies that $K$ is a PL~sphere.
\begin{algorithm}
    \caption{Certifying that a $\Z_2$-homology sphere of Picard number at most $7$ is a PL~sphere by checking vertex links against the seed database.\label{algo:test_PLS}}
    \KwData{A $\Z_2$-homology sphere $K$ on $m$ non-ghost vertices, of dimension $n-1$; the database $\texttt{TCSeeds}$ of seed PL~spheres on strictly fewer than $m$ vertices.}
    \KwResult{$\texttt{true}$ if $K$ is certified a PL~sphere (i.e. every vertex link of $K$ has a seed isomorphic to some element of $\texttt{TCSeeds}$), $\texttt{false}$ otherwise.}
    \For{$v$ a non-ghost vertex of $K$}{
        $K/v \gets \texttt{Link}(K,v)$
        \tcp*{the link of $v$ in $K$, on $m-1$ vertices, of dimension $n-2$}
        $(L,\,m',\,n') \gets \texttt{Seed}(K/v)$
        \tcp*{$L$: seed of $K/v$; $m'\le m-1$ non-ghost vertices; $n' \leq n-1$}
        \If{$\texttt{TCSeeds}[(m',n')] = \emptyset$ or $L$ is not isomorphic to any element of $\texttt{TCSeeds}[(m',n')]$}{
            \Return\texttt{false}\;
        }
    }
    \Return $\texttt{true}$\;
\end{algorithm}

The procedure $\texttt{Seed}$ used in Algorithm~\ref{algo:test_PLS} has the following property: whenever 
$$
    (L,m',n')=\texttt{Seed}(K/v),
$$
the complex $L$ is a seed on $m'$ non-ghost vertices and of dimension $n'-1$, and, after deleting ghost vertices, $K/v$ is obtained from $L$ by a sequence of wedge operations. 

\begin{proposition} \label{prop:PLS-test}
    Let $K$ be a mod~$2$ toric-colorable $\Z_2$-homology sphere on $m$ non-ghost vertices, of dimension $n-1$, with $m-n\le 7$.
    Suppose that, for every pair $(m',n')$ with $m'<m$ and $m'-n'\le 7$, the database $\texttt{TCSeeds}[(m',n')]$ contains exactly one representative of each isomorphism class of mod~$2$ toric-colorable seed PL~spheres on $m'$ non-ghost vertices and of dimension~$n'-1$, and contains no other complexes.
    Then Algorithm~\ref{algo:test_PLS} returns true on input $K$ if and only if $K$ is a PL~sphere.
\end{proposition}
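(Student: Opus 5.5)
We prove the two directions separately. Throughout we use that mod~$2$ toric-colorability passes to links via projection: if $\lambda^\R$ is a mod~$2$ characteristic map over $K$, then $\Proj_v\lambda^\R$ is one over $K/v$, as in Proposition~\ref{prop:projection_matroid}. We also use that the Picard number is non-increasing under taking links and deleting ghost vertices. Indeed, $K/v$ has dimension $n-2$ on at most $m-1$ non-ghost vertices, so its Picard number is at most $m-n\le 7$. Finally, the wedge operation preserves Picard number, and a complex is a PL~sphere if and only if its seed is. The latter holds because the wedge operation preserves and reflects the PL~sphere property: $\Wed_v(K)$ is a PL~sphere if and only if $K$ is, by~\cite{Choi-Park2016Wedge}.

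\emph{``Only if'' direction.} Assume Algorithm~\ref{algo:test_PLS} returns true. Then for every non-ghost vertex $v$, the seed $L$ of $K/v$ is isomorphic to an element of $\texttt{TCSeeds}[(m',n')]$. By the hypothesis on the database, every such element is a PL~sphere, so $L$ is a PL~sphere. By the stated property of $\texttt{Seed}$, $K/v$ (with ghost vertices deleted) is obtained from $L$ by iterated wedges. Since wedges preserve PL~spheres, $K/v$ is a PL~sphere. Hence every vertex link of $K$ is a PL~sphere, so $K$ is a closed PL~manifold. It is a $\Z_2$-homology sphere with $m-n\le 7$, so Theorem~\ref{theorem:combisp} shows that $K$ is a PL~sphere.

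\emph{``If'' direction.} Assume $K$ is a PL~sphere and let $v$ be a non-ghost vertex. Then $K/v$ is a PL~sphere. It is mod~$2$ toric-colorable via $\Proj_v\lambda^\R$. Let $(L,m',n')=\texttt{Seed}(K/v)$. We need to show three things.
\begin{enumerate}
\item $L$ is a PL~sphere. This is where the reflection direction of the wedge result is needed: if $\Wed_w(L')$ is a PL~sphere, then so is $L'$. Alternatively, one can argue that $L'$ is a link of $\Wed_w(L')$ at a wedged vertex, and links of PL~spheres are PL~spheres.
\item $L$ is mod~$2$ toric-colorable. This uses that colorability descends from a wedge to the complex it was obtained from: $L'$ is the link in $\Wed_w(L')$ of one of the two wedged vertices, so projection supplies a characteristic map.
\item $m'<m$ and $m'-n'\le 7$. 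Here $m'\le m-1$, and the Picard number of $L$ equals that of $K/v$ (ghost vertices removed), which is at most $7$.
\end{enumerate}
By the completeness of the database, $L$ is then isomorphic to a representative in $\texttt{TCSeeds}[(m',n')]$. So no iteration returns false, and the algorithm returns true.

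The main obstacle is the bookkeeping in the ``if'' direction. One must check that the seed of a link still falls inside the range covered by the database: it needs fewer vertices, Picard number at most $7$, and must be colorable. Ghost vertices of $K/v$ are what lower the Picard number and vertex count, and they must be handled consistently with the definition of Picard number used in the paper.
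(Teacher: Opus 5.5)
Your proposal is correct and follows essentially the same route as the paper. In one direction you certify each vertex link through its seed and the database, conclude that $K$ is a closed PL~manifold, and apply Theorem~\ref{theorem:combisp}; in the other you realize each inverse wedge as a link at a wedged vertex, which shows that the seed of $K/v$ is a mod~$2$ toric-colorable PL~sphere with fewer than $m$ vertices and Picard number at most $7$, and hence is present in $\texttt{TCSeeds}$. Your appeal to the wedge operation reflecting the PL~sphere property is not needed, since the link argument you also give, which is the paper's, already suffices.
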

\begin{proof}
    Suppose first that Algorithm~\ref{algo:test_PLS} returns true. 
    Then, for every non-ghost vertex~$v$ of $K$, if $(L,m',n')=\texttt{Seed}(K/v)$, then $L$ is isomorphic to an element of $\texttt{TCSeeds}[(m',n')]$.
    By the assumption on $\texttt{TCSeeds}$, the complex $L$ is a seed PL~sphere.

    By the assumed property of $\texttt{Seed}$, after deleting ghost vertices, $K/v$ is obtained from $L$ by a sequence of wedge operations.
    Since wedge operations preserve the PL~sphere property, and ghost vertices do not affect the geometric realization, $K/v$ is a PL~sphere. 
    Hence every vertex link of $K$ is a PL~sphere. 
    Therefore $K$ is a closed PL~manifold.

    Since $K$ is a $\Z_2$-homology sphere and $m-n\le 7$, Theorem~\ref{theorem:combisp} implies that $K$ is a PL~sphere.

    Conversely, suppose that $K$ is a PL~sphere. 
    Let $v$ be a non-ghost vertex of~$K$. 
    Then $K/v$ is a PL~sphere. 
    Moreover, since $K$ is mod~$2$ toric-colorable, the link $K/v$ is also mod~$2$ toric-colorable by projection of a mod~$2$ characteristic map, equivalently by contraction of the associated binary matroid.

    Let $(L,m',n')=\texttt{Seed}(K/v)$.
    Then $L$ is a PL~sphere. 
    Indeed, each inverse wedge step can be realized as taking the link of one of the two wedged vertices, and links of vertices in PL~spheres are PL~spheres.
    The same argument shows that $L$ remains mod~$2$ toric-colorable, because mod~$2$ toric-colorability is preserved under taking links by projection.

    The Picard number does not increase when passing to vertex links, and inverse wedge operations preserve the Picard number. 
    Hence 
    $$
        m'-n'\le m-n\le 7.
    $$
    Also $m'<m$. 
    By the assumption on~$\texttt{TCSeeds}$, the seed~$L$ is isomorphic to an element of $\texttt{TCSeeds}[(m',n')]$. 
    Thus the test in Algorithm~\ref{algo:test_PLS} succeeds for the vertex~$v$. 
    Since this holds for every non-ghost vertex $v$, Algorithm~\ref{algo:test_PLS} returns true.
\end{proof}

For a fixed $p$, we will store in $\texttt{TCSeeds}[(m,n)]$ one representative of each isomorphism class of mod~$2$ toric-colorable seed PL~spheres with $m$ non-ghost vertices and dimension $n$, where $m-n\le p$.

We start by populating $\texttt{TCSeeds}$ with all boundaries of cross-polytopes having Picard number at most $p$ since they are the only seeds that are included in no cosimple binary matroids by Corollary~\ref{cor:non_IDCM_colorable_seeds}.

We finally proceed as follows.

\noindent\textbf{Pre-filters.}
We first populate a data structure $\texttt{SeedZ2HomSphere}$ such that\\ $\texttt{SeedZ2HomSphere}[(m,n)]$ contains all mod~$2$ toric-colorable weak pseudomanifolds which are seeds and $\Z_2$-homology spheres, on $m$ non-ghost vertices and of dimension~$n$.
\begin{enumerate}
    \item For every cosimple binary matroid $\texttt{ObjBinMat}[m][i]$, compute its automorphism group $G$ and keep only the lexicographically minimal weak pseudomanifolds with respect to $G$ in $\texttt{WkPsdMfd}[m][i]$.
    \item For every $K$ in $\texttt{WkPsdMfd}[m][i]$, if $K$ is a $\Z_2$-homology sphere and a seed, add it to $\texttt{SeedZ2HomSphere}[(m',n')]$ for appropriate $(m',n')$.
\end{enumerate}
\noindent\textbf{Isomorphism and PL~sphere property.}
We then populate the final data structure $\texttt{TCSeeds}$.
\begin{enumerate}
    \item[(3)] For increasing values of $m$, and for decreasing $n=m-1,\ldots,m-p$, consider every $K\in\texttt{SeedZ2HomSphere}[(m,n)]$.
    \begin{enumerate}
        \item[(3.1)] If Algorithm~\ref{algo:test_PLS} with input $K$ and $\texttt{TCSeeds}$ returns $\texttt{false}$, discard $K$.
        \item[(3.2)] If $K$ is not isomorphic to any element of $\texttt{TCSeeds}[(m,n)]$, insert it into $\texttt{TCSeeds}[(m,n)]$.
    \end{enumerate}
\end{enumerate}
The order in which we go through $\texttt{SeedZ2HomSphere}$ certifies that for a fixed $m$, the database $\texttt{TCSeeds}$ contains exactly the seeds of all PL~spheres on $m'<m$ vertices, which by Proposition~\ref{prop:PLS-test} certifies that $K$ is a PL~sphere.

At the end of this process, the table $\texttt{TCSeeds}$ contains one representative of each isomorphism class of mod~$2$ toric-colorable seeds of Picard number at most $p$ detected by the cosimple matroid search, together with the cross-polytope boundaries added at the initial step.

\subsection{Results}
Using this framework, we reduced the weak pseudomanifold enumeration stage for the complete Picard number four computation from more than $10$ days in the GPU-based approach of~\cite{Choi-Jang-Vallee2024} to $10$ minutes on a single CPU.
The automorphism reduction step together with bucketing reduced the postprocessing part from half a day to less than an hour on a single CPU.
Note that we reproduced the exact same mod~$2$ toric-colorable seeds with Picard number four, which gives an independent verification of the data of~\cite{Choi-Jang-Vallee2024}.

For Picard number five, we parallelized the Gray code enumeration by fixing several initial bits.
We ran the algorithm using $64$ threads\footnote{Nic5 at University of Liège (SEGI), two AMD EPYC 7542 (Rome) 32-core CPUs at 2.9 GHz} and the enumeration finished in about one day.
The postprocessing step finished in around 30 hours.

Table~\ref{tab:kernel_dim} illustrates why the dynamic 
programming approach is essential at level $m=10$.
A direct application of Algorithm~\ref{algorithm:Gray_code} 
to each cosimple binary matroid $M$ on $10$ elements would 
require enumerating its full mod~$2$ kernel of $A(M)$, at a 
cost of $\cO(Fn\cdot 2^d)$ per matroid, with $d$ ranging 
from $16$ to $60$ across the $46$ matroids at this level.
Such an enumeration is computationally infeasible for the 
matroids with large~$d$.

The dynamic programming approach instead decomposes each such 
enumeration into many calls to 
Algorithm~\ref{algorithm:Gray_code_fixed_facets}, each seeded 
by a cone constraint from a previously computed link.
Unit propagation in Phase~$1$ then fixes or forbids additional 
facets, reducing the effective search space per call from 
$2^d$ to $2^{|\cI^\circ|}$.
As Table~\ref{tab:kernel_dim} shows, the reduction 
$d-|\cI^\circ|$ has mean $30.15$ and median $30$, 
corresponding to a typical reduction in search space by a 
factor of $2^{30}\approx 10^9$ per call compared with a 
naive enumeration of the full kernel.
While individual calls reach up to $|\cI^\circ|=41$, the 
median of $|\cI^\circ|=16$ confirms that the vast majority 
of calls are inexpensive, and it is this concentration that 
makes the overall computation feasible within a single day 
on $64$ threads.

\begin{table}[ht]
    \caption{Statistics on the mod~$2$ kernel dimension~$d$ of 
    $A(M)$ across the $46$ cosimple binary matroids of corank~$5$ 
    on $m=10$ elements, on the number $|\cI^\circ|$ of free 
    generators after unit propagation, and on their difference 
    $d-|\cI^\circ|$, measured across all calls to 
    \texttt{GrayCodeExtend} using one contraction per matroid.
    The difference $d-|\cI^\circ|$ quantifies the reduction in 
    search space achieved by the dynamic programming cone 
    constraint and unit propagation combined, replacing a 
    $2^d$-step enumeration by one of $2^{|\cI^\circ|}$ steps 
    per call.\label{tab:kernel_dim}}
    \centering
    \small
    \begin{tabular}{lcccc}
        \toprule
         & min & mean & median & max \\
        \midrule
        Kernel dimension $d$ of $A(M)$
            & 16 & 33.59 & 32 & 60 \\
        Free generators $|\cI^\circ|$ per call
            & 2  & 13.83 & 16 & 41 \\
        Reduction $d - |\cI^\circ|$
            & 11& 30.15 & 30 & 36 \\
        \bottomrule
    \end{tabular}
\end{table}

Tables~\ref{table:numbers_pic4} and~\ref{table:numbers_pic5} describe the number of weak pseudomanifolds obtained at each step of the postprocessing.
More precisely, the quantity displayed represents before step \begin{enumerate}
    \item[(1):] the number of mod~$2$ toric-colorable weak pseudomanifolds which satisfy the Euler test obtained via the dynamic programming algorithm,
    \item[(2):] the number of mod~$2$ toric-colorable weak pseudomanifolds which satisfy the Euler test up to automorphism of each binary matroid it is included in,
    \item[(3):] the number of mod~$2$ toric-colorable and $\Z_2$-homology spheres which are wedge-minimal,
    \item[final:] the number of mod~$2$ toric-colorable seed PL~spheres up to isomorphism.
\end{enumerate}

\begin{table}[ht]
    \caption{Number of objects obtained before each step and at the end of the postprocessing steps of the enumeration for Picard number $4$. Note that the Euler test was used only up to $n=7$.}
    \label{table:numbers_pic4}
    \centering
    \small
    \setlength{\tabcolsep}{5pt}
    \begin{tabular}{c c c c c }
    \toprule
    $n$
    & before step~(1) & before step~(2) & before step (3) & final result\\
    \midrule
    $2$  &  $131$   &   $5$  &  $4$  & $1$        \\
    $3$  &   $1{,}164$  &   $43$  &   $36$  & $4$          \\
    $4$  &   $10{,}127$  &   $316$  &   $181$  & $21$  \\
    $5$  &   $31{,}731$  &   $1{,}330$  &   $744$  & $142$  \\
    $6$  & $78{,}802$    &   $2{,}997$  &   $1{,}701$  & $733$ \\
    $7$  &  $147{,}819$   &  $3{,}531$   &  $2{,}054$   & $1{,}190$  \\
    $8$  &   $237{,}401$  &  $2{,}518$   &   $1{,}243$  & $776$     \\
    $9$  &    $128{,}173$ &    $746$ &   $366$  & $243$  \\
    $10$ &   $141{,}179$  & $143$    &   $54$  & $39$      \\
    $11$ &  $154{,}440$   &   $24$  &  $5$   & $4$      \\
    \bottomrule
    \end{tabular}
\end{table}

\begin{table}[ht]
    \caption{Number of objects obtained before each step and at the end of the postprocessing steps of the enumeration for Picard number $5$.}
    \label{table:numbers_pic5}
    \centering
    \small
    \setlength{\tabcolsep}{5pt}
    \begin{tabular}{c c c c c }
    \toprule
    $n$
    & before step~(1) & before step~(2) & before step (3) & final result\\
    \midrule
    $2$  &  $657$   &   $5$  &  $3$ & $1$        \\
    $3$  &   $19{,}360$  &   $488$  &  $441$  & $13$          \\
    $4$  &   $1{,}305{,}761$  &   $46{,}721$  &  $43{,}010$   & $1{,}170$  \\
    $5$  &   $78{,}933{,}520$  &   $3{,}062{,}851$  &    $2{,}688{,}620$  & $198{,}846$  \\
    \bottomrule
    \end{tabular}
\end{table}

\section{Toric-colorability for the case $(n,m)=(5,10)$.}\label{sec:toric-colorable}

For each mod~$2$ toric-colorable seed PL~sphere $K$ with $m=10$ vertices and $\dim K=4$, we checked whether $K$ admits an integral characteristic map whose columns are $0/1$-vectors. 
More precisely, we searched for a map 
$$
    \lambda \colon V(K) \longrightarrow \{0,1\}^{5}\setminus\{0\}
$$
such that for every facet $\sigma=\{v_1,\ldots,v_5\}$ of $K$,
$$
    \left|\det\bigl(\lambda(v_1),\ldots,\lambda(v_5)\bigr)\right|=1.
$$
Such a map is an integral characteristic map, and hence proves that $K$ is toric-colorable.

The search was performed by an exact finite backtracking algorithm.  
First, we enumerated all unordered $5$-subsets
$$
    B\subset \{0,1\}^{5}\setminus\{0\}
$$
with $|\det B|=1$.
There are $80{,}856$ such admissible subsets, obtained by checking 
all $\tbinom{31}{5}=169{,}911$ unordered $5$-subsets of 
$\{0,1\}^5\setminus\{0\}$ for the determinant condition.
During the backtracking search, vertices were assigned non-zero $0/1$ columns one at a time.  
Whenever all vertices of a facet had been assigned, the corresponding set of five columns was required to be one of the admissible subsets.  
More generally, a partial assignment was discarded as soon as some partially assigned facet could not be extended to an admissible subset.  
Thus the algorithm exhaustively checks the existence of a $0/1$ integral characteristic map.

Every complex among the $198{,}846$ mod~$2$ toric-colorable seed PL~spheres in the case $(n,m)=(5,10)$ passed the test.
Therefore all mod~$2$ toric-colorable seed PL~spheres in our $(n,m)=(5,10)$ data set are toric-colorable.  
In fact, each of them admits a $0/1$ integral characteristic map.
This yields the counts stated in Theorem~\ref{thm-main}.

\begin{remark}
    It is an open problem whether every mod~$2$ toric-colorable 
    PL~sphere is toric-colorable. Our computation provides 
    computational evidence toward a positive answer in the case 
    $(n,m)=(5,10)$: all $198{,}846$ mod~$2$ toric-colorable seeds 
    of dimension four and Picard number five admit a $\{0,1\}$-integral 
    characteristic map. Whether this persists in higher Picard numbers 
    remains open.\\
    A stronger open problem, known as the \emph{lifting property} and 
    introduced by Zhi L\"u at the toric topology conference held in Osaka in 2011 (as documented in~\cite{Choi-Park2016Wedge}), asks whether every 
    mod~$2$ characteristic map over a mod~$2$ toric-colorable 
    PL~sphere is the mod~$2$ reduction of some $\Z$-characteristic map. A positive answer to the lifting problem would imply toric-colorability for every mod~$2$ toric-colorable PL~sphere.
\end{remark}

\providecommand{\bysame}{\leavevmode\hbox to3em{\hrulefill}\thinspace}
\providecommand{\MR}{\relax\ifhmode\unskip\space\fi MR }
\providecommand{\MRhref}[2]{%
  \href{http://www.ams.org/mathscinet-getitem?mr=#1}{#2}
}
\providecommand{\href}[2]{#2}

\end{document}